\theoremstyle{plain} %documentation says there are only three styles
    \newtheorem{theorem}{Theorem}
    \newtheorem{lemma}[theorem]{Lemma}
    \newtheorem{proposition}[theorem]{Proposition}
    \newtheorem{claim}[theorem]{Claim}
\theoremstyle{definition} % For roman text in the body
\DeclareMathOperator{\R}{\mathbb{R}}
\DeclareMathOperator{\Z}{\mathbb{Z}}
\DeclarePairedDelimiter\floor{\lfloor}{\rfloor}
\DeclareMathOperator{\De}{d}
\DeclareMathOperator{\one}{\mathbbm{1}} %%%% indicator function
\newcommand{\E}{\mathsf{E}}
\newcommand{\Ex}[1]{\mathsf{E}\left[#1 \right]}
\newcommand{\var}[1]{\mathsf{Var}\left[#1 \right]}
\newcommand{\prob}{\mathsf{P}}
\DeclareMathOperator{\e}{e}
\renewcommand{\O}[1]{\mathrm{O}\left(#1\right)} %%%% for O asymptotics
\renewcommand{\o}[1]{\mathrm{o}\left(#1\right)} %%%% for o asymptotics
\newcommand{\eps}{\epsilon}
\newcommand{\eq}[1]{\begin{equation#1}}
\newcommand{\eeq}[1]{\end{equation#1}}
\newcommand{\eqa}[1]{\begin{eqnarray#1}}
\newcommand{\eeqa}[1]{\end{eqnarray#1}}
\newcommand{\vr}{\varphi}
\begin{document}

\title[Extremes of the supercritical Gaussian Free Field]{Extremes of the supercritical Gaussian Free Field}
\author[A. Chiarini]{Alberto Chiarini}
\thanks{The first author's research was supported by RTG 1845.}
\address{Technische Universit\"at Berlin,
MA 766, Strasse des 17. Juni 136, 10623
Berlin, Germany}
\email{chiarini@math.tu-berlin.de}

\author[A. Cipriani]{Alessandra Cipriani}
\address{Weierstrass-Institut, Mohrenstrasse 39, 10117 Berlin, Germany}
\email{Alessandra.Cipriani@wias-berlin.de}

\author[R. S. Hazra]{Rajat Subhra Hazra}
\address{Theoretical Statistics and Mathematics Unit, Indian Statistical Institute, 203, B.T. Road, Kolkata, 700108, India}
\email{rajatmaths@gmail.com}
\begin{abstract}
We show that the rescaled maximum of the discrete Gaussian Free Field (DGFF) in dimension larger or equal to $3$ is in the maximal domain of attraction of the Gumbel distribution. The result holds both for the infinite-volume field as well as the field with zero boundary conditions. We show that these results follow from an interesting application of the Stein-Chen method from \citet{AGG}.
\end{abstract}
\maketitle
\section{Introduction}
In this article we consider the problem of determining the scaling limit of the maximum of the discrete Gaussian free field (DGFF) on $\Z^d$, $d\ge 3$.  Recently the maximum of the DGFF in the critical dimension $d=2$ was determined in \cite{BrDiZe}. In this case, due to the presence of the logarithmic behavior of covariances, the problem is connected to extremes of various other models, for example the Branching Brownian motion and the Branching random walk. In $d\ge 3$, the presence of covariances decaying polynomially changes the setting but the behavior of maxima is still hard to determine \cite[Section 9.6]{Chatterjee}. This dependence also becomes a hurdle in various properties of level set percolation of the DGFF which were exhibited in a series of interesting works (\cite{PFASS, ASS, DrePF}). The behavior of local extremes in the critical dimension has also been unfolded recently in the papers \cite{BisLou, BisLou2}.

We consider the lattice $\Z^d$, $d\ge 3$ and take the infinite-volume Gaussian free field $(\vr_\alpha)_{\alpha\in \Z^d}$ with law $\prob$ on $\R^{\Z^d}$. The covariance structure of the field is given by the Green's function $g$ of the standard random walk, namely $\Ex{ \vr_\alpha\vr_\beta}= g(\alpha-\beta)$, for $\alpha,\,\beta\in \Z^d$. For more details of the model we refer to Section~\ref{sec:DGFF}. It is well- known (see for instance \cite{Lawler}) that  for $\alpha\neq \beta$, $g(\alpha-\beta)$ behaves likes $\|\alpha-\beta\|^{2-d}$ and hence for $\|\alpha-\beta\|\to+\infty$, the covariance goes to zero. However this is not enough to conclude that the scaling is the same of an independent ensemble. To give an example where this is not the case, when $V_N$ is the box of volume $N$, $\sum_{\alpha\in V_N}\vr_\alpha$ is of order $N^{1/{2}+1/d}$, unlike the i.~i.~d. setting (see for example \citet[Section 3.4]{Funaki}).

The expected maxima over a box of volume $N$ behaves like $\sqrt{2g(0) \log N}$. An independent proof of this fact is provided in Proposition~\ref{prop:LLN} below; this confirms the idea that the extremes of the field resemble that of independent $\mathcal N(0, g(0))$ random variables. In this article we show that the similarity is even deeper, since the fluctuations of the maximum after recentering and scaling converge to a Gumbel distribution. Note that in $d=2$ the limit is also Gumbel, but with a random shift (see \citet[Theorem 2.5]{BrDiZe}, \citet{BisLou}).
The main results of this article is the following.
\begin{theorem}\label{thm:main}
Let $A$ be a subset of $\Z^d$ with $|A|=N$\footnote{$|A|$ denotes the cardinality of $A$.}. We define two sequences as follows:
\begin{equation}\label{eq:cs}
b_N=\sqrt{g(0)}\left[\sqrt{2 \log N}-\frac{\log \log N+\log(4\pi)}{2\sqrt{2 \log N}}\right] \quad \text{ and }\quad a_N=g(0)(b_N)^{-1}
\end{equation}
so that for all $z\in \R$
$$
\lim_{N\to +\infty}\prob\left(\frac{\max_{\alpha\in A}\varphi_\alpha-b_N}{a_N}<z\right)=\exp(-\e^{-z}).
$$
\end{theorem}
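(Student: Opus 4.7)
The plan is to recast $\prob(\max_{\alpha\in A}\vr_\alpha<u_N)$, with $u_N:=a_N z+b_N$, as the probability of no exceedances of $u_N$ among $\{\vr_\alpha\}_{\alpha\in A}$, and then apply the Stein-Chen Poisson approximation of \citet{AGG}. Set $I_\alpha=\one\{\vr_\alpha>u_N\}$, $p_\alpha=\prob(I_\alpha=1)$, $W=\sum_{\alpha\in A}I_\alpha$, and $\lambda_N=\Ex{W}=N\,p_\alpha$, so that $\{\max_\alpha\vr_\alpha<u_N\}=\{W=0\}$. It then suffices to show $\lambda_N\to \e^{-z}$ and that $W$ is asymptotically $\mathrm{Poisson}(\lambda_N)$. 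The former is a routine Mills-ratio computation at $x=u_N/\sqrt{g(0)}$: plugging \eqref{eq:cs} into $\overline{\Phi}(x)\sim\phi(x)/x$ yields $Np_\alpha\to \e^{-z}$.

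For the Poisson approximation I would pick a radius $r_N$ (a suitable polylogarithm of $N$, fixed below) and take as dependence neighbourhoods $B_\alpha=\{\beta\in A:\|\beta-\alpha\|\le r_N\}$. The Arratia-Goldstein-Gordon inequality then gives
\[
\bigl|\prob(W=0)-\e^{-\lambda_N}\bigr|\le b_1+b_2+b_3,
\]
where $b_1=\sum_\alpha\sum_{\beta\in B_\alpha}p_\alpha p_\beta$, $b_2=\sum_\alpha\sum_{\beta\in B_\alpha\setminus\{\alpha\}}\Ex{I_\alpha I_\beta}$, and $b_3=\sum_\alpha\Ex{\bigl|\prob(I_\alpha=1\mid\sigma(I_\beta:\beta\notin B_\alpha))-p_\alpha\bigr|}$.

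The term $b_1$ is controlled by $b_1\le C r_N^d\lambda_N^2/N$, which vanishes as long as $r_N=o(N^{1/d})$. For $b_2$ I would split $B_\alpha$ into a ``near'' piece of bounded radius and a ``far'' annulus. On the near piece the bivariate Gaussian tail bound $\prob(\vr_\alpha>u,\vr_\beta>u)\le C u^{-2}\exp(-u^2/(g(0)+g(\alpha-\beta)))$, together with $g(\alpha-\beta)<g(0)$ for $\alpha\neq\beta$, delivers a per-pair probability of order $\mathrm{O}(N^{-1-\delta})$ for some $\delta>0$, whence an overall contribution $\mathrm{O}(N^{-\delta})$. On the far annulus, $g(\alpha-\beta)=\mathrm{O}(\|\alpha-\beta\|^{2-d})$ is small, so the joint tail is essentially $p_\alpha p_\beta$ and summing gives $\mathrm{O}(r_N^d/(N\log N))\to 0$.

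The main obstacle is $b_3$, which I would handle through the Markov property of the GFF. Write $\vr_\alpha=h_\alpha+\tilde{\vr}_\alpha$, with $h_\alpha=\Ex{\vr_\alpha\mid \mathcal F_\alpha}$ the harmonic extension of $\{\vr_\beta\}_{\|\beta-\alpha\|>r_N}$ and $\tilde{\vr}_\alpha$ independent of $\mathcal F_\alpha:=\sigma(\vr_\beta:\|\beta-\alpha\|>r_N)$ with variance $g(0)-\mathrm{Var}(h_\alpha)$. Standard random-walk potential estimates give $\mathrm{Var}(h_\alpha)\le C r_N^{2-d}$. Conditionally, $\prob(I_\alpha=1\mid\mathcal F_\alpha)=\overline{\Phi}((u_N-h_\alpha)/\sqrt{g(0)-\mathrm{Var}(h_\alpha)})$, and a Taylor expansion at $h_\alpha=0$ combined with Cauchy-Schwarz bounds $b_3$ by a quantity of order $N\,p_\alpha\sqrt{\exp(C\,\mathrm{Var}(h_\alpha)\,u_N^2/g(0)^2)-1}$, which vanishes as soon as $r_N^{2-d}\log N\to 0$, i.e.\ $r_N\gg(\log N)^{1/(d-2)}$. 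Any $r_N$ of the form $(\log N)^s$ with $s>1/(d-2)$ satisfies both this lower bound and $r_N=o(N^{1/d})$, and the Gumbel limit follows.
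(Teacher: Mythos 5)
Your proposal follows the same overall architecture as the paper: rewrite $\{\max_{\alpha\in A}\vr_\alpha<u_N\}$ as $\{W=0\}$ for the exceedance count $W$, verify $\Ex{W}\to\e^{-z}$ by Mills ratio, and apply the Arratia--Goldstein--Gordon inequality with polylogarithmic dependence neighbourhoods, using the spatial Markov property and the Green's function decay to control $b_1,b_2,b_3$. The treatment of $b_1$ is the same, and your near/far split for $b_2$ is a harmless redundancy: the paper obtains a uniform bound over the whole neighbourhood from a single observation, namely $g(\alpha-\beta)/g(0)\le g(e_1)/g(0)=1-\kappa$ for all $\alpha\ne\beta$, which already gives a per-pair probability $\mathrm{O}(N^{-2/(2-\kappa)})$ without any case distinction.

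The genuine divergence is in $b_3$. The paper conditions on $\mathcal H_\alpha'=\sigma(\vr_\beta:\beta\in A\setminus B_\alpha)$, truncates on the event $\{|\mu_\alpha|\le u_N^{-1-\eps}\}$, and then compares $\widetilde\prob_{U_\alpha}(\psi_\alpha+\mu_\alpha>u_N)$ with $p$ by a delicate two-sided Mills-ratio argument, isolating the effect of the shrunken variance $g_{U_\alpha}(\alpha)$ in a separate claim ($\sup_\alpha(1-g(0)/g_{U_\alpha}(\alpha))u_N^2=\o{1}$). Your route instead uses $\Ex{f(h_\alpha)}=p_\alpha$ (tower property), bounds $\Ex{|f(h_\alpha)-p_\alpha|}\le\sqrt{\var{f(h_\alpha)}}$, and identifies $\Ex{f(h_\alpha)^2}$ as the tail of a bivariate Gaussian $(\vr_\alpha,\vr_\alpha')$ with correlation $\var{h_\alpha}/g(0)$, after which a single bivariate tail estimate does the work. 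This is conceptually tighter and subsumes the paper's auxiliary Claim (the variance shift and the harmonic mean shift both feed into $\var{f(h_\alpha)}$); the paper's version is more hands-on but perhaps more transparent about where each error term comes from. One minor quantitative inaccuracy in your sketch: the bound $\var{f(h_\alpha)}\lesssim p_\alpha^2\left(\exp\left(C\var{h_\alpha}u_N^2/g(0)^2\right)-1\right)$ misses an $\mathrm{O}(p_\alpha^2/u_N^2)$ term coming from the Mills-ratio error in relating the Savage bivariate tail bound to $p_\alpha^2$ at correlation zero. This term dominates once $\var{h_\alpha}u_N^4\to 0$, which is the regime you are actually in, so the correct rate for $b_3$ is $\mathrm{O}(1/u_N)=\mathrm{O}((\log N)^{-1/2})$ rather than your faster rate; but it still vanishes, so the conclusion stands.
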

Note that scaling and centering are exactly the same as in the i.~i.~d.  $\mathcal N(0, g(0))$ case, see for example \cite{Hall1982}. As in $d=2$, the argument depends on a comparison lemma. We show that in fact the proof is an interesting application of a Stein-Chen approximation result. Not only does the result depend on the correlation decay, but also crucially on the Markov property of the Gaussian free field. We use Theorem 1 of the paper by \cite{AGG} which approximates an appropriate dependent Binomial process with a Poisson process, and gives some calculable error terms. In general showing that the error terms go to zero is a non-trivial task. In the DGFF case, thanks to estimates on the Green's function and the Markov property, the error terms are negligible. \\
The techinques used for the infinite-volume DGFF allows us to draw conclusions also for the field with boundary conditions. For $n>0$ let $N:=n^d$; we consider the discrete hypercube $V_N:=[0,\,n-1]^d\cap \Z^d$. We define therein a mean zero Gaussian field $(\psi_\alpha)_{\alpha\in\Z^d}$ whose covariance matrix $(g_N(\alpha,\,\beta))_{\alpha,\,\beta\in V_N}$ is the Green's function of the discrete Laplacian with Dirichlet boundary conditions outside $V_N$ (again for a more precise definition see Section~\ref{sec:DGFF}). The convergence result is the following:
\begin{theorem}\label{thm:main2}
Let $V_N$ be as above and $(\psi_\alpha)_{\alpha\in \Z^d}$ be a DGFF with zero boundary conditions outside $V_N$ with law $\widetilde \prob_{V_N}$.  Let the centering and scaling be as in~\eqref{eq:cs}. Then for all $z\in \R$
$$
\lim_{N\to +\infty}\widetilde\prob_{V_N}\left(\frac{\max_{\alpha\in V_N}\psi_\alpha-b_N}{a_N}<z\right)=\exp(-\e^{-z}).
$$

\end{theorem}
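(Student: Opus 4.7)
The plan is to run the Stein--Chen approximation of \cite{AGG} used to prove Theorem~\ref{thm:main} directly on the field $\psi$, exploiting the pointwise domination $g_N(\alpha,\beta)\le g(\alpha-\beta)$ and the domain Markov property of $\psi$.

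First, reduce to the interior. Fix a slowly diverging scale such as $\ell_N=(\log N)^{3}$ and split $V_N=W_N\sqcup B_N$ with $W_N:=\{\alpha\in V_N:\mathrm{dist}(\alpha,\partial V_N)\ge \ell_N\}$. Then $|B_N|=\O{(\log N)^{3}\,N^{(d-1)/d}}$, and since $\var{\psi_\alpha}\le g(0)$, a Gaussian union bound gives, for any $\eps\in(0,1/d)$,
$$
\widetilde\prob_{V_N}\!\left(\max_{\alpha\in B_N}\psi_\alpha>\sqrt{2g(0)(1-\eps)\log N}\right)\le |B_N|\,N^{-(1-\eps)}\longrightarrow 0.
$$
Since $\sqrt{2g(0)(1-\eps)\log N}$ falls short of $b_N$ by $\Theta(\sqrt{\log N})$, after rescaling by $a_N=\Theta(1/\sqrt{\log N})$ the boundary strip is pushed to $-\infty$ in the rescaled variable and may be discarded.

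Next, verify that the centering and scaling from~\eqref{eq:cs} remain correct on $W_N$. A random-walk representation gives $0\le g(0)-g_N(\alpha,\alpha)=\O{\ell_N^{2-d}}$ uniformly for $\alpha\in W_N$, since the walk must travel at least $\ell_N$ before exiting $V_N$ and $g(x)\sim c\|x\|^{2-d}$. Hence $g_N(\alpha,\alpha)=g(0)(1+o(1/\log N))$ uniformly, and a direct computation with Mills' ratio shows $\widetilde\prob_{V_N}(\psi_\alpha>b_N+a_Nz)=(1+o(1))\prob(\varphi_0>b_N+a_Nz)$ uniformly for $\alpha\in W_N$; summing gives $\sum_{\alpha\in W_N}\widetilde\prob_{V_N}(\psi_\alpha>b_N+a_Nz)\to e^{-z}$.

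Finally, apply the Poisson approximation of \cite{AGG} to the indicators $\{\one\{\psi_\alpha>b_N+a_Nz\}\}_{\alpha\in W_N}$. The error terms $b_1$ and $b_2$ are controlled by the covariance decay $|g_N(\alpha,\beta)|\le g(\alpha-\beta)=\O{\|\alpha-\beta\|^{2-d}}$ exactly as for $\varphi$ in Theorem~\ref{thm:main}, while $b_3$ is eliminated through the domain Markov property of $\psi$ (inherited from the locality of the discrete Laplacian) via the same conditioning argument. The main technical obstacle is the loss of translation invariance, which forces the sums defining $b_1,b_2$ to be bounded uniformly in $\alpha\in W_N$; this uniformity is however automatic from the pointwise domination $g_N\le g$, which reduces every local sum to one already controlled in the proof of Theorem~\ref{thm:main}.
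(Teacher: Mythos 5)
Your proposal is sound and takes a genuinely different route from the paper. The paper proves the lower bound by Slepian's lemma against an i.i.d.\ ensemble, and the upper bound by restricting to the \emph{macroscopic} bulk $V_N^\delta$ (distance $\ge\delta N^{1/d}$ from $\partial V_N$), writing $\vr_\alpha=\psi_\alpha+\mu_\alpha^{(N)}$ via the Markov property, bounding $\max|\mu_\alpha^{(N)}|$, invoking Theorem~\ref{thm:main} for $\vr$ on $V_N^\delta$, and then recovering the statement via a convergence-of-types argument (Lemma~\ref{lemma:cot}) before sending $\delta\to0$. You instead re-run the Stein--Chen machinery directly on $\psi$, which avoids both Slepian and the convergence-of-types step, at the cost of having to re-verify $b_1,b_2,b_3$ in a setting without translation invariance. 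Your choice of a \emph{poly-logarithmic} boundary strip ($\ell_N=(\log N)^3$) is what lets you dodge Lemma~\ref{lemma:cot}: since $|B_N|=\O{(\log N)^3N^{(d-1)/d}}$ one has $|W_N|=N(1+\o{1})$, so no rescaling of $N$ is needed, whereas the paper's macroscopic $\delta$-strip changes the effective volume to $(1-2\delta)^dN$ and forces the type argument. Both routes work; yours is somewhat more self-contained, the paper's leverages Theorem~\ref{thm:main} as a black box.

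One place you oversimplify is the $b_2$ step. The pointwise domination $g_N(\alpha,\beta)\le g(\alpha-\beta)$ alone does \emph{not} ``reduce every local sum to one already controlled'': the required lower bound on the quadratic form $\mathbf{1}^t\Sigma_2^{-1}\mathbf{1}=\frac{g_N(\alpha)+g_N(\beta)-2g_N(\alpha,\beta)}{g_N(\alpha)g_N(\beta)-g_N(\alpha,\beta)^2}$ is not monotone in the off-diagonal entry only; one also needs the two-sided diagonal control $g(0)\ge g_N(\alpha)\ge g(0)-\O{\ell_N^{2-d}}$ uniformly on $W_N$ (which you do establish earlier). With both pieces, writing $a=g_N(\alpha),b=g_N(\beta),c=g_N(\alpha,\beta)\in[0,g(\alpha-\beta)]$, one has $ab-c^2\le g(0)^2-c^2$ and $a+b-2c\ge 2g(0)(1-\O{\ell_N^{2-d}})-2c$, whence $\mathbf{1}^t\Sigma_2^{-1}\mathbf{1}\ge\frac{2}{g(0)+c}-\O{\ell_N^{2-d}}\ge\frac{2}{(2-\kappa)g(0)}-\o{1/\log N}$, which, multiplied by $u_N(z)^2\sim 2g(0)\log N$, reproduces the exponent $N^{-2/(2-\kappa)+\o{1}}$ of the infinite-volume proof. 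The same goes for the prefactor: $\Delta_i=u_N(z)(\mathbf{1}^t\Sigma_2^{-1})_i$ stays bounded below by $c\,u_N(z)$ using $a,b\ge g(0)(1-\o{1})$ and $c\le(1-\kappa)g(0)$. So the claim holds, but it needs the diagonal estimate and not just domination. With that caveat, and with the understanding that the $b_3$ computation must be carried out with the \emph{domain} Markov property of $\psi$ (conditioning on $\sigma(\psi_\beta:\beta\in V_N\setminus B_\alpha)$ yields a DGFF on $B_\alpha\cap V_N$ plus an independent harmonic extension whose variance is again $\le\sup g_N(\alpha,\gamma)\le C_d(\log N)^{-(d-2)(2+2\eps)}$ by $g_N\le g$), your argument is complete.
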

The core of the proof is an application of Slepian's Lemma and a re-run of the proof of Theorem~\ref{thm:main}.\\
%\section{Preliminaries}
The structure of the article is as follows. In Section~\ref{sec:DGFF} we recall the main facts on the DGFF that will be used in Section~\ref{sec:main} to prove the main theorem.
\section{Preliminaries on the DGFF}\label{sec:DGFF}
Let $d\ge 3$ and  denote with $\|\,\cdot\,\|$ the $\ell_\infty$-norm on the lattice. Let $\psi=(\psi_\alpha)_{\alpha\in \Z^d}$ be a discrete Gaussian Free Field with zero boundary conditions outside $\Lambda\subset \Z^{d\phantom{d}}$. On the space $\Omega:=\R^{\Z^d}$ endowed with its product topology, its law $\widetilde \prob_\Lambda$ can be explicitly written as
$$\widetilde \prob_\Lambda(\De \psi)=\frac1{Z_\Lambda}\exp\left(-\frac1{4d}\sum_{\alpha,\,\beta\in \Z^d:\,\|\alpha-\beta\|=1}\left(\psi_\alpha-\psi_\beta\right)^2\right)\prod_{\alpha\in \Lambda}\De \psi_\alpha \prod_{\alpha\in \Z^d\setminus\Lambda}\delta_0(\De \psi_\alpha).$$
In other words $\psi_\alpha=0$ $\widetilde \prob_\Lambda$-a.~s. if $\alpha\in \Z^d\setminus \Lambda$, and $(\psi_\alpha)_{\alpha\in \Lambda}$ is a multivariate Gaussian random variable with mean zero and covariance $(g_\Lambda(\alpha,\,\beta))_{\alpha,\,\beta\in \Z^d}$, where $g_\Lambda$ is the Green's function of the discrete Laplacian problem with Dirichlet boundary conditions outside $\Lambda$. For a thorough review on the model the reader can refer for example to \cite{ASS}. It is known \cite[Chapter 13]{Georgii} that the finite-volume measure $\psi$ admits an infinite-volume limit as $\Lambda \uparrow \Z^d$ in the weak topology of probability measures. This field will be denoted as $\vr=(\vr_\alpha)_{\alpha\in \Z^d}$. It is a centered Gaussian field with covariance matrix $g(\alpha,\,\beta)$ for $\alpha,\,\beta\in \Z^d$. With a slight abuse of notation, we write $g(\alpha-\beta)$ for $g(0,\,\alpha-\beta)$ and also $g_\Lambda(\alpha)=g_\Lambda(\alpha,\,\alpha)$. It will be convenient for us to view $g$ through its random walk representation: if $\mathbb P_\alpha$ denotes the law of a simple random walk $S$ started at $\alpha\in \Z^d$, then
$$
g(\alpha,\,\beta)=\mathbb E_\alpha\left[\sum_{n\ge 0}\one_{\left\{S_n=\beta\right\}}\right].
$$
In particular this gives $g(0)<+\infty$ for $d\ge 3$.

A key fact for the Gaussian Free Field is its spatial Markov property, which will be used in the paper. The proof of the following Lemma can be found in \citet[Lemma 1.2]{PFASS}.
\begin{lemma}[Markov property of the Gaussian Free Field]\label{fact:MP}
 Let $\emptyset\neq K\Subset \Z^d$\footnote{$A\Subset B$ means that $A$ is a finite subset of $B$.}, $U:=\Z^d\setminus K$ and define $(\widetilde \vr_\alpha)_{\alpha\in \Z^d}$ by
 $$
 \vr_\alpha=\widetilde \vr_\alpha+\mu_\alpha,\quad \alpha\in \Z^d
 $$
 where $\mu_\alpha$ is the $\sigma(\vr_\beta, \,\beta\in K)$-measurable map defined as
 \begin{equation}\label{eq:drift}
 \mu_\alpha=\sum_{\beta\in K}\mathbb P_\alpha\left(H_{K}<+\infty,\,S_{H_{K}}=\beta\right)\vr_\beta,\quad \alpha\in \Z^d.
 \end{equation}
Here $H_K:=\inf\left\{n\ge 0:\,S_n\in K \right\}.$
Then, under $\prob$, $(\widetilde \vr_\alpha)_{\alpha\in \Z^d}$ is independent of $\sigma(\vr_\beta, \,\beta\in K)$ and distributed as $(\psi_\alpha)_{\alpha\in \Z^d}$ under $\widetilde \prob_{U}$.
\end{lemma}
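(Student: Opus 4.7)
The plan is to leverage the fact that $(\varphi_\alpha)_{\alpha\in\Z^d}$ is a centered Gaussian family, so projecting it onto the $L^2$-closed subspace generated by $\{\varphi_\beta:\beta\in K\}$ gives an orthogonal (hence, in the Gaussian setting, independent) decomposition. Concretely, I define $\mu_\alpha$ by the formula in \eqref{eq:drift} and $\widetilde\vr_\alpha:=\vr_\alpha-\mu_\alpha$. It then suffices to verify three things: (i) $\mu_\alpha$ coincides with the conditional expectation $\E[\vr_\alpha\mid\sigma(\vr_\beta,\beta\in K)]$; (ii) the residual $\widetilde\vr$ is independent of $\sigma(\vr_\beta,\beta\in K)$; and (iii) the covariance of $\widetilde\vr$ equals $g_U$.

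Point (i) follows from the Gaussian projection characterization: since $\mu_\alpha$ is manifestly $\sigma(\vr_\beta,\beta\in K)$-measurable, I only need $\E[(\vr_\alpha-\mu_\alpha)\vr_\gamma]=0$ for every $\gamma\in K$, i.e.
\[
g(\alpha,\gamma)=\sum_{\beta\in K}\mathbb P_\alpha\left(H_K<+\infty,\,S_{H_K}=\beta\right)g(\beta,\gamma),\qquad \gamma\in K.
\]
This is the standard first-entrance decomposition of the Green's function: using the random walk representation of $g$ and the strong Markov property applied at the hitting time $H_K$, any excursion from $\alpha$ to $\gamma\in K$ has to enter $K$ at some point $\beta\in K$, and conditioning on that entry point yields exactly the identity. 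Point (ii) is then automatic from Gaussian theory: in a jointly Gaussian family, uncorrelated random variables are independent, so $\widetilde\vr_\alpha=\vr_\alpha-\E[\vr_\alpha\mid\sigma(\vr_\beta,\beta\in K)]$ is independent of the entire $\sigma$-algebra $\sigma(\vr_\beta,\beta\in K)$.

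For point (iii), I compute
\[
\cov{\widetilde\vr_\alpha,\widetilde\vr_\beta}=g(\alpha,\beta)-\sum_{\gamma\in K}\mathbb P_\alpha\left(H_K<+\infty,\,S_{H_K}=\gamma\right)g(\gamma,\beta),
\]
after canceling the symmetric cross terms against one of the quadratic terms using (i). To identify the right-hand side with $g_U(\alpha,\beta)$, I split the random walk sum defining $g(\alpha,\beta)$ at the hitting time $H_K$: the pre-$H_K$ part contributes $g_U(\alpha,\beta)$ (the Green's function of the walk killed on entering $K$, which is $g_U$), and by the strong Markov property at $H_K$ the post-$H_K$ part contributes the sum over $\gamma\in K$ of $\mathbb P_\alpha(H_K<\infty,\,S_{H_K}=\gamma)g(\gamma,\beta)$. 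Subtracting yields $g_U(\alpha,\beta)$, which together with the Gaussian law determines $\widetilde\vr$ as a DGFF with zero boundary conditions outside $U$.

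The main obstacle in making this rigorous is handling the fact that $\mu_\alpha$ is defined via an \emph{a priori infinite} linear combination of $\vr_\beta$ (when $K$ is finite but $\alpha$ ranges over all of $\Z^d$, the harmonic measure $\mathbb P_\alpha(H_K<\infty,\,S_{H_K}=\cdot)$ is nontrivial for every $\alpha$): one needs the sums and Green's function identities above to converge and to commute with expectation, which is where the transience assumption $d\ge 3$ (guaranteeing $g(0)<+\infty$ and summable hitting estimates) enters. Everything else is bookkeeping with the strong Markov property and Gaussian orthogonality.
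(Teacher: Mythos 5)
Your argument is correct. Note, though, that the paper itself does not prove this lemma: it is stated as a known fact and attributed to \citet[Lemma 1.2]{PFASS}. Your proof is, in substance, the standard one found in that reference: verify via the first-entrance decomposition of the Green's function (strong Markov at $H_K$) that $\mu_\alpha$ is the $L^2$-orthogonal projection of $\vr_\alpha$ onto $\mathrm{span}\{\vr_\beta:\beta\in K\}$, deduce independence of the residual family from $\sigma(\vr_\beta,\beta\in K)$ by joint Gaussianity, and identify $\cov{\widetilde\vr_\alpha,\widetilde\vr_\beta}$ with $g_U(\alpha,\beta)$ by splitting the random-walk sum defining $g(\alpha,\beta)$ at $H_K$. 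One small inaccuracy in your closing remark: since $K$ is \emph{finite}, the sum defining $\mu_\alpha$ in \eqref{eq:drift} is always a finite linear combination of the $\vr_\beta$, so there is no issue of convergence there; the place where transience (equivalently $d\ge 3$) genuinely enters is in ensuring $g$ itself is finite so the Green's function identities make sense and the field $\vr$ exists. Apart from that overcaution, the proof is sound and complete.
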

As an immediate consequence of the Lemma (see \citet[Remark 1.3]{PFASS})
 $$\prob\left( (\vr_\alpha)_{\alpha\in \Z^d}\in \cdot\,|\sigma(\vr_\beta, \,\beta\in K)\right)=\widetilde \prob_U\left((\psi_\alpha+\mu_\alpha)_{\alpha\in \Z^d}\in \cdot \right)\quad \prob-a.~s. $$
 where $\mu_\alpha$ is given in \eqref{eq:drift}, $\widetilde \prob_U$ does not act on $(\mu_\alpha)_{\alpha\in \Z^d}$ and $(\psi_\alpha)_{\alpha\in \Z^d}$ has the law $\widetilde \prob_U$.

\subsubsection{Law of large numbers of the recentered maximum}
Although this can be obtained directly by Theorem~\ref{thm:main}, we think it is interesting to insert an independent proof of the behavior of the maximum of the DGFF.
\begin{proposition}[LLN for the maximum]\label{prop:LLN} Let $V_N:=[0,\,n-1]^d\cap\Z^d$, $N:=n^d>0$. The following limit holds:
$$
\lim_{N\to+\infty}\frac{\E\left[\max_{\alpha\in V_N}\vr_\alpha \right]}{\sqrt{2\log N}}= g(0).
$$
\end{proposition}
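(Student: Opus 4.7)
The plan is to sandwich $\E[M_N]$, with $M_N := \max_{\alpha \in V_N} \vr_\alpha$, between matching upper and lower bounds both asymptotic to $\sqrt{2 g(0) \log N}$, from which the stated ratio tends to $\sqrt{g(0)}$ (matching the heuristic $\E[M_N] \sim \sqrt{2 g(0) \log N}$ flagged in the introduction).

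For the upper bound I would simply invoke the standard sub-Gaussian maximal inequality: since each $\vr_\alpha$ is a centered Gaussian with variance $g(0)$, one has $\E[M_N] \le \sqrt{2 g(0) \log N}$ with no input from the correlation structure at all. Dividing by $\sqrt{2 \log N}$ caps the limsup immediately.

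The lower bound is the substantive step, and is where the decay of the Green's function must enter. I would pass to a well-separated sub-lattice $A_N := V_N \cap (\Delta_N \Z)^d$ with mesh $\Delta_N := \lceil (\log \log N)^{1/(d-2)} \rceil$, so that $|A_N| \sim N / \Delta_N^d$ and $\log |A_N| = \log N (1 + \o{1})$, while the bound $g(x) = \O{\|x\|^{2-d}}$ forces
$$\delta_N := \max_{\alpha \ne \beta \in A_N} g(\alpha - \beta) = \O{\Delta_N^{2-d}} \to 0.$$
Introducing an auxiliary i.i.d.\ family $(Y_\alpha)_{\alpha \in A_N}$ with $Y_\alpha \sim \mathcal{N}(0, g(0) - \delta_N)$, the increment comparison
$$\E[(\vr_\alpha - \vr_\beta)^2] = 2(g(0) - g(\alpha - \beta)) \ge 2(g(0) - \delta_N) = \E[(Y_\alpha - Y_\beta)^2]$$
would let Sudakov--Fernique conclude $\E[M_N] \ge \E[\max_{\alpha \in A_N} \vr_\alpha] \ge \E[\max_{\alpha \in A_N} Y_\alpha]$, and the classical asymptotic for maxima of i.i.d.\ Gaussians then delivers $\E[\max_{\alpha \in A_N} Y_\alpha] \sim \sqrt{2(g(0) - \delta_N) \log |A_N|}$, which is $\sqrt{2 g(0) \log N}(1 + \o{1})$.

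The only delicate point will be the balancing of scales in the lower bound: $\Delta_N$ must diverge fast enough that $\delta_N \to 0$ uniformly (so the i.i.d.\ comparison is meaningful), yet slowly enough that $\log |A_N|$ retains $\log N$ as its leading term. The polynomial decay of $g$ together with the $\ell^\infty$ geometry makes the two requirements easily compatible, and any moderately slow choice such as $\Delta_N = (\log \log N)^{1/(d-2)}$ suffices. Notably this argument bypasses both the Markov property and the refined Stein--Chen estimates used for Theorem~\ref{thm:main}, which is precisely what makes the independent proof appealing.
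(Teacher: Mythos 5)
Your proposal is correct and follows essentially the same route as the paper: a sub-Gaussian maximal bound for the upper half, and Sudakov--Fernique on a slowly-diverging diluted sublattice (exploiting the polynomial decay of the Green's function) for the lower half. The only cosmetic difference is the choice of mesh ($\lceil (\log\log N)^{1/(d-2)}\rceil$ versus the paper's $\lfloor \log n\rfloor$) and your more explicit introduction of the i.i.d.\ comparison family $(Y_\alpha)$; both choices yield the same asymptotics.
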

% \begin{lemma}[Fluctuations of the DGFF]
% The fluctuations of the $d$-dimensional discrete Gaussian Free Field in $d\ge 3$ are of order $1$.
% \end{lemma}
% \begin{proof}
% Borell's inequality \cite[Theorem 10]{ZeiBRW} tells us that that
% $$
% \prob\left( \left|\max_{x\in V_N}\vr_x-\Ex{\max_{x\in V_N}\vr_x}\right|>x\right)\le \exp\left(-\frac{x^2}{2} \sigma\right)
% $$
% where $\sigma=\max_x \var{\vr_x}$ is a finite constant. Hence by letting $Z_N:=\max_{x\in V_N}\vr_x-\Ex{\max_{x\in V_N}\vr_x}$ we have
% $$
% \Ex{\left(\max_{x\in V_N}\vr_x-\Ex{\max_{x\in V_N}\vr_x}\right)^2}=\int_0^{+\infty} x \prob(Z_N>x) \De x\le \int_0^{+\infty} x \exp\left(-\frac{x^2}{2} \sigma\right)\De x<+\infty
% $$
% uniformly in $N$.
% \end{proof}
\begin{proof}
Observe first that $g(0)\ge 1$ \cite[Exercise 1.5.7]{Lawler}.
The upper bound follows from \citet[Prop. 1.~1.~3]{Tal03} with $\tau:= g(0)$ and $M:=N$.
As for the lower bound, we will use Sudakov-Fernique inequality \cite[Theorem 2.~2.~3]{AdlerTaylor}. We first need a lower bound for $d(\alpha,\,\beta):=\sqrt{\Ex{\left(\vr_\alpha-\vr_\beta\right)^2}}$: we will apply here the bound
\begin{eqnarray}
%&&g(0)=1+\frac1{2d}+\o{d^{-1}},\quad d\to+\infty,\label{eq:one}\\
&&g(\alpha)\le\left(\frac{c\sqrt d}{\Vert \alpha\Vert} \right)^{d-2},\quad \Vert \alpha\Vert \ge d\label{eq:two}
\end{eqnarray}
whose proof is provided in \cite{ASSLow}. The key to obtain the result is to use a diluted version of the DGFF as follows. Consider $V_N^{(k)}:=V_N\cap k\Z^d$, where $k:=\floor*{\log n}\in \{1,\,2,\,\ldots\}$.
%so that
%\eq{}\label{eq:ineq}
%g(0)-\left(\frac{c\sqrt d}{\floor*{\log n}}\right)^{d-2}>0\iff n\ge \ceil*{\exp\left(\frac{c\sqrt d}{g(0)^{\frac1{d-2}}}\right)}
%\eeq{}
Note the fact that the expected maximum on $V_N$ is lower bounded by that on the diluted lattice $V_N^{(k)}$.
%\eq{}\label{eq:maxim}
%\Ex{\max_{\alpha\in V_N}\vr_\alpha}\ge \Ex{\max_{\alpha\in V_N^{(k)}}\vr_\alpha}.
%\eeq{}
Now for $\alpha,\,\beta\in T:=V_N^{(k)}$ and $k>d$
\eqa{*}
&&d(\alpha,\,\beta)=\sqrt{2 g(0)-2g(\alpha-\beta)}\stackrel{\eqref{eq:two}}{\ge}\sqrt 2 \sqrt{g(0)-\left(\frac{c\sqrt d}{\Vert \alpha-\beta\Vert} \right)^{d-2}}\\
&&\ge \sqrt 2 \sqrt{g(0)-\left(\frac{c\sqrt d}{\floor*{\log n}}\right)^{d-2}}=:\nu(n,\,d){>}0
\eeqa{*}
for $n$ large enough.
Notice also that $\lim_{N\to+\infty}\nu(n,\,d)=\sqrt{2 g(0)}.$
Hence by an application of the Sudakov-Fernique inequality
\eqa{*}
&&\frac{\Ex{\max_{\alpha\in V_N}\vr_\alpha}}{\sqrt{ \log N}}\ge \nu(n,\,d)\sqrt{\frac{\log \left| T\right|}{\log N}}.\eeqa{*}
%By setting
%$$
%k:=1-\frac1d\frac{\log \log N}{\log N}
%$$
We obtain $\log \left| T\right|=d\log \floor*{\frac{n}{k}}(1+\o{1})=d\log \floor*{\frac{n}{\floor*{\log n}}}(1+\o{1})$\footnote{$f(N)=\o{1}$ means $\lim_{N\to+\infty}f(N)=0.$}. It follows that $\frac{\log \left| T\right|}{\log N}=1+\o{1}$ and
$$
\lim_{N\to+\infty}\frac{\Ex{\max_{\alpha \in V_N}\vr_\alpha}}{\sqrt{\log N}}\ge {}\sqrt{2 g(0)}.
$$
\end{proof}
\section{Proof of the main result}\label{sec:main}
The proof of the main result is an application of the Stein-Chen method. To keep the article self contained we recall the result from \cite{AGG}.
\subsection{Poisson approximation for extremes of random variables}\label{subsec:Po}
The main tool we will use relies on a two-moment condition to determine the convergence of the number of exceedances for a sequence of random variables. Let $(X_\alpha)_{\alpha\in  A}$ be a sequence of (possibly dependent) Bernoulli random variables of parameter $p_\alpha$. Let $W:=\sum_{\alpha\in A}X_\alpha$ and $\lambda:=\Ex{W}$. Now for each $\alpha$ we assume the existence of a subset $B_\alpha\subseteq  A$ which we consider a ``neighborhood'' of dependence for the variable $X_\alpha$, such that $X_\alpha$ is nearly independent from $X_\beta$ if $\beta\in A\setminus B_\alpha$. Set
$$
b_1:=\sum_{\alpha\in A}\sum_{\beta\in B_\alpha}p_\alpha p_\beta,
$$
$$
b_2:=\sum_{\alpha\in A}\sum_{\alpha\neq \beta\in B_\alpha}\Ex{X_\alpha X_\beta},
$$
$$
b_3:=\sum_{\alpha\in A}\Ex{\left|\Ex{X_\alpha-p_\alpha\left|\right.\mathcal H_\alpha}\right|}
$$
where
$$
\mathcal H_\alpha:=\sigma\left(X_\beta:\,\beta\in A\setminus B_\alpha\right).
$$
\begin{theorem}[Theorem 1, \cite{AGG}]\label{thm:AGG}
Let $Z$ be a Poisson random variable with $\Ex{Z}=\lambda$ and let $d_{TV}$ denote the total variation distance between probability measures. Then
$$
d_{TV}(\mathcal L(W),\,\mathcal L(Z))\le 2(b_1+b_2+b_3)
$$
and
$$
\left|P(W=0)-\e^{-\lambda} \right|<\min\left\{1,\,\lambda^{-1}\right\}(b_1+b_2+b_3).
$$
\end{theorem}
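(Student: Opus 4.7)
The plan is to apply the classical Stein--Chen method for Poisson approximation. For each bounded test function $h:\mathbb{Z}_{\ge 0}\to\mathbb{R}$ one solves \emph{Stein's equation}
$$\lambda f(k+1) - k f(k) = h(k) - \mathbb{E}[h(Z)], \quad k\ge 0,$$
which admits an explicit solution $f = f_h$. The first task is to establish the sup-norm bounds
$$\|\Delta f_h\|_\infty \le \frac{1-e^{-\lambda}}{\lambda} \le \min\{1,\lambda^{-1}\}, \qquad \|f_h\|_\infty \le 1,$$
where $\Delta f(k) := f(k+1)-f(k)$; these are classical and can be proved either by Abel summation on the explicit formula or by a coupling of immigration--death chains with $\mathrm{Poisson}(\lambda)$ stationary law. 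Taking $h=\mathbf{1}_B$ and evaluating at $k=W$ gives
$$\prob(W\in B)-\prob(Z\in B) = \mathbb{E}\bigl[\lambda f(W+1)-W f(W)\bigr],$$
so the whole argument reduces to bounding the right-hand side uniformly in $B$.

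The second step is a decomposition tuned to the neighborhood structure. Assuming $\alpha\in B_\alpha$, set $W_\alpha := \sum_{\beta\in B_\alpha} X_\beta$ and $Z_\alpha := W - W_\alpha$, so that $Z_\alpha$ is $\mathcal{H}_\alpha$-measurable. Using $\lambda = \sum_\alpha p_\alpha$ and $W = \sum_\alpha X_\alpha$, one rearranges
\begin{align*}
\lambda f(W+1) - W f(W) &= \sum_{\alpha\in A} p_\alpha\bigl[f(W+1)-f(Z_\alpha+1)\bigr] \\
&\quad - \sum_{\alpha\in A} X_\alpha\bigl[f(W)-f(Z_\alpha+1)\bigr] \\
&\quad + \sum_{\alpha\in A} (p_\alpha - X_\alpha)\, f(Z_\alpha+1).
\end{align*}
The first two sums isolate the dependence of $X_\alpha$ on its neighborhood $B_\alpha$, and the third captures the residual dependence on the variables outside $B_\alpha$.

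Bounding the three pieces is then essentially routine. The telescoping inequality $|f(W+1)-f(Z_\alpha+1)|\le W_\alpha\|\Delta f\|_\infty$ together with $\mathbb{E}[W_\alpha]=\sum_{\beta\in B_\alpha} p_\beta$ gives a contribution at most $\|\Delta f\|_\infty\, b_1$ for the first sum. For the second, $X_\alpha(f(W)-f(Z_\alpha+1))$ vanishes when $X_\alpha=0$ and, when $X_\alpha=1$, is bounded by $\sum_{\beta\in B_\alpha\setminus\{\alpha\}} X_\alpha X_\beta\,\|\Delta f\|_\infty$, yielding $\|\Delta f\|_\infty\, b_2$. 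For the third, since $f(Z_\alpha+1)$ is $\mathcal{H}_\alpha$-measurable,
$$\Bigl|\mathbb{E}\bigl[(p_\alpha-X_\alpha)\,f(Z_\alpha+1)\bigr]\Bigr| \le \|f\|_\infty\, \mathbb{E}\bigl[\bigl|\mathbb{E}[X_\alpha-p_\alpha\mid\mathcal{H}_\alpha]\bigr|\bigr],$$
which sums to $\|f\|_\infty\, b_3$. Combining and invoking the Stein-solution bounds delivers the total variation inequality; for $B=\{0\}$ the solution $f_{\mathbf{1}_{\{0\}}}$ admits the sharper bound $\|f\|_\infty\le(1-e^{-\lambda})/\lambda$, which upgrades the $b_3$ coefficient and produces the $\min\{1,\lambda^{-1}\}$ prefactor in the second bound.

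The main obstacle I anticipate is the first step: obtaining the sharp sup-norm bounds on the Stein solution with the correct $\lambda$-dependence. This is classical but delicate, and is precisely what makes the sharper second inequality possible. Once those bounds are in hand, the remainder is bookkeeping: the magic of the method is that $b_1$, $b_2$, and $b_3$ emerge naturally from the three-piece decomposition above.
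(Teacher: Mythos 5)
This statement is quoted verbatim as Theorem~1 of \cite{AGG} and is \emph{not} proved in the paper; the cited reference is its only source, so there is no paper-internal proof to compare against. Your write-up is a correct reproduction of the Stein--Chen argument that underlies AGG's theorem: solve $\lambda f(k+1)-kf(k)=h(k)-\E[h(Z)]$, invoke the classical magic-factor bounds $\|\Delta f_h\|_\infty\le (1-e^{-\lambda})/\lambda$ and $\|f_h\|_\infty\le 1$ for indicator test functions, and split $\lambda f(W+1)-Wf(W)$ into the three pieces that produce $b_1$, $b_2$, $b_3$ via the telescoping bound $|f(m)-f(n)|\le |m-n|\,\|\Delta f\|_\infty$ and the $\mathcal H_\alpha$-measurability of $f(Z_\alpha+1)$. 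Two remarks: (i) the constants you quote actually yield $d_{TV}\le \min(1,\lambda^{-1})(b_1+b_2)+b_3\le b_1+b_2+b_3$, which is \emph{sharper} than the stated $2(b_1+b_2+b_3)$ --- the factor~$2$ in AGG is deliberate slack kept for a clean statement, so quoting the looser bound is harmless; (ii) for $B=\{0\}$ the explicit solution $f(k)=(k-1)!\,\lambda^{-k}\,\prob(Z\ge k)$ ($k\ge1$) is positive and nonincreasing, so $\|f\|_\infty=f(1)=(1-e^{-\lambda})/\lambda$, and since $(1-e^{-\lambda})/\lambda$ is strictly below $\min(1,\lambda^{-1})$ this gives the strict second inequality exactly as you say. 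One precondition you use implicitly --- and correctly, given how the paper chooses $B_\alpha$ --- is $\alpha\in B_\alpha$, which is what makes $W-(Z_\alpha+1)=\sum_{\beta\in B_\alpha\setminus\{\alpha\}}X_\beta$ when $X_\alpha=1$. The only thing not spelled out is the derivation of the magic factors, which you rightly flag as the one nontrivial input; that is standard and fine to cite.
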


Let now $A\Subset \Z^d$ with $N:=|A|$, $u_N(z):=a_N z + b_N$, and define for all $\alpha\in A$
$$
X_\alpha=\one_{\left\{\varphi_\alpha>u_N(z)\right\}}\sim Be(p_\alpha).
$$
A standard tool to determine the asymptotic of $p$ is Mills ratio:
\eqa{}\label{eq:Mills}
\left( 1-\frac1{t^2}\right)\frac{\e^{-{t^2/2}}}{\sqrt{2\pi}t}\le \prob\left(\mathcal N(0,\,1)>t \right)\le \frac{\e^{-{t^2/2}}}{\sqrt{2\pi}t},\quad t>0.
\eeqa{}
This yields $p_\alpha\sim N^{-1}\exp(-z)$\footnote{$f\sim g$ means that $\lim_{N\to+\infty} f(N)/g(N)=1.$}. Since $p_\alpha$ is independent of $\alpha$, we suppress the subscript $\alpha$ throughout. We furthermore introduce
$
W:=\sum_{\alpha\in A} X_\alpha
$
and see that $\Ex{W}\sim \e^{-z}$. Of course $W$ is closely related to the maximum since $\left\{\max_{\alpha\in A}\vr_\alpha\le u_N(z)\right\}=\left\{W=0 \right\}$. We will now fix $z\in \R$ and $\lambda:=\e^{-z}$. We are now ready to prove our main result.
\begin{proof}
Our main idea is to apply Theorem~\ref{thm:AGG}. The proof will first show that the limit is Gumbel, and in the second part we will prove uniform convergence. To this scope we define, for a fixed but small $\eps>0$,
$$B_\alpha:=B\left(\alpha,\,(\log N)^{2+2\eps}\right)\cap A $$
where $B(\alpha,\,L)$ denotes the ball of center $\alpha$ of radius $L$ in the $\ell_\infty$-distance. We draw below examples of such neighborhoods when $\alpha \in \partial A:=\left\{\gamma\in A:\,\exists\,\beta\in \Z^d\setminus A,\,\|\beta-\gamma\|=1 \right\}$ and $\alpha\in\mathrm{int}(A)=A\setminus\partial A$.
\begin{figure}[!ht]
        \centering
        \begin{subfigure}[b]{0.5\textwidth}
                \includegraphics[width=\textwidth]{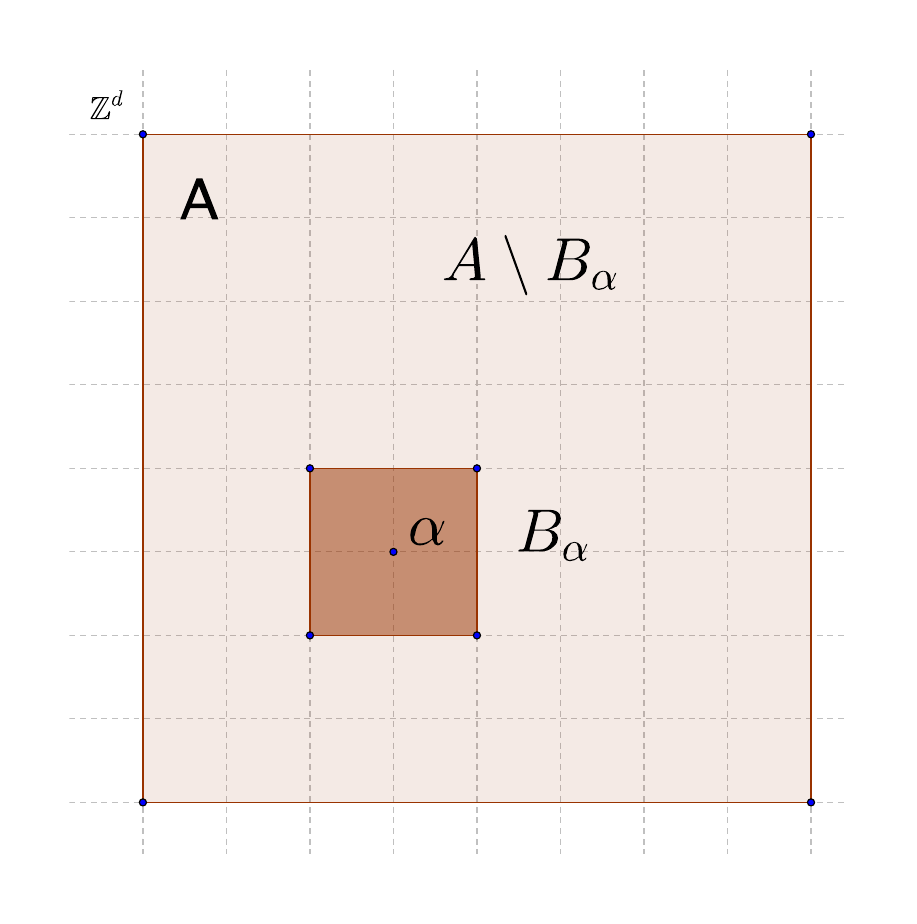}
                \caption{$B_\alpha$ when $\alpha\in \mathrm{int}(A)$.}
                \label{fig:NonBoundary}
        \end{subfigure}%
          %(or a blank line to force the subfigure onto a new line)
        \begin{subfigure}[b]{0.5\textwidth}
                \includegraphics[width=\textwidth]{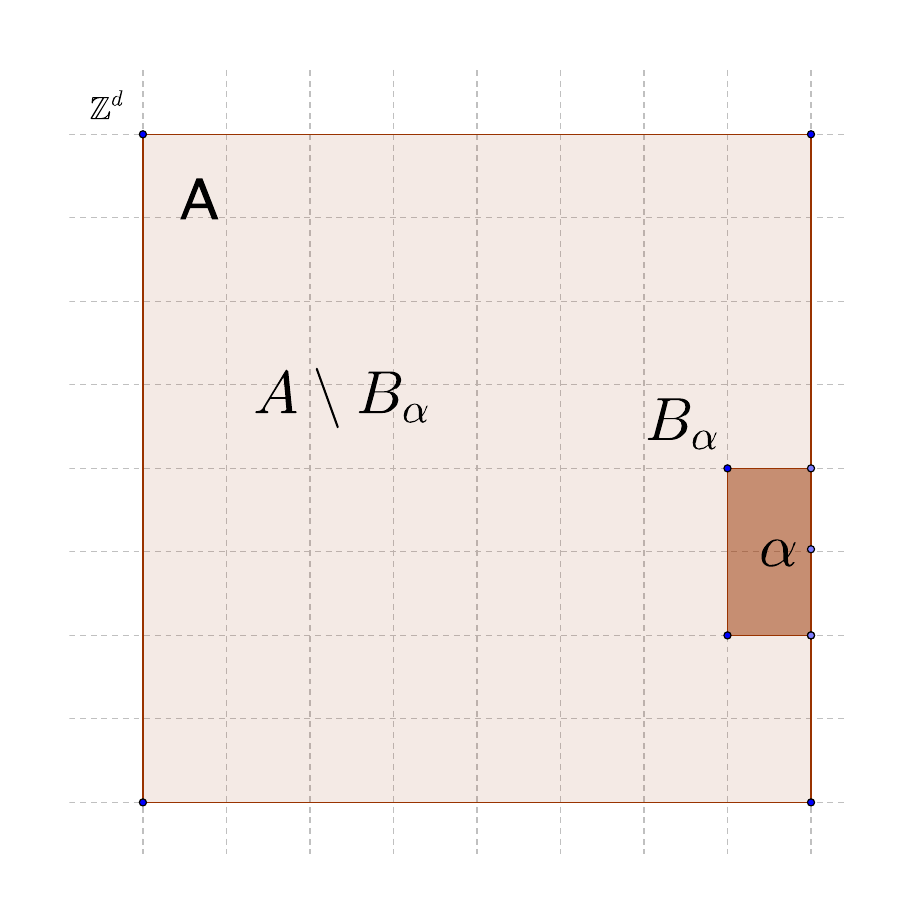}
                \caption{$B_\alpha$ when $\alpha\in \partial A.$}
                \label{fig:Boundary}
        \end{subfigure}
           \caption{Examples of $B_\alpha$}\label{fig:Balpha}
\end{figure}
\\

{\bf Convergence.}
The method is based on the estimate of three terms (cf. Subsec.~\ref{subsec:Po}).\\
\indent (i) Recall $b_1=\sum_{\alpha\in A}\sum_{\beta\in B_\alpha}p^2$. Using Mills ratio we have
\eqa{}
b_1&\le& c N(\log N)^{d(2+2\eps)}\left(\frac{\sqrt{g(0)}\e^{-\frac1{2g(0)} u_N(z)^2}}{\sqrt{2\pi }u_N(z)}\right)^2\nonumber\\
&=& N^{-1}(\log N)^{d(2+2\eps)}\e^{-2z+\o{1}}=\o{1}.\label{eq:b_1}
\eeqa{}
\indent (ii) Recall $b_2=\sum_{\alpha\in A}\sum_{\alpha\neq \beta\in B_\alpha}\Ex{X_\alpha X_\beta}$. First we need to estimate the joint probability
$$\prob\left(\vr_\alpha>u_N(z), \vr_\beta>u_N(z)\right).$$
Denote the covariance matrix
$$\Sigma_2=\begin{bmatrix}
g(0) & g(\alpha-\beta)\\
g(\alpha-\beta) & g(0) \\
\end{bmatrix}$$
Note that, for $w\in \R^2$, one has
$$w^t\Sigma_2^{-1} w=\frac1{g(0)^2-g(\alpha-\beta)^2} \left( g(0)\left(w_1^2+w_2^2\right)-2g(\alpha-\beta) w_1w_2\right).$$
Using $\mathbf{1}:=(1,1)^t$ we denote by
$$\Delta_i:=u_N(z) \left(\mathbf{1}^t \Sigma_2^{-1}\right)_i= \frac{u_N(z) (g(0)-g(\alpha-\beta))}{g(0)^2-g(\alpha-\beta)^2}=\frac{u_N(z)}{g(0)+g(\alpha-\beta)},\quad i=1,\,2.$$
Exploiting an easy upper bound on bi-variate Gaussian tails (see \cite{Savage}) we have
\begin{equation}\label{eq:b_2:1}
\prob(\vr_\alpha>u_N(z), \vr_\beta>u_N(z))\le \frac1{2\pi} \frac1{|\det \Sigma_2|^{1/2}\Delta_1\Delta_2}\exp\left(-\frac{u_N(z)^2}{2}  \mathbf{1}^t \Sigma_2^{-1}\mathbf 1\right)
\end{equation}
Note that using the explicit formula for  the determinant one can bound the first factor easily by
$$\frac1{2\pi} \frac1{|\det \Sigma_2|^{1/2}\Delta_1\Delta_2}\le  \frac{\left(1+ \frac{g(\alpha-\beta)}{g(0)}\right)^{3/2}}{\left(1-\frac{g(\alpha-\beta)}{g(0)}\right)^{1/2}}.$$
Now using $u_N(z)^2= b_N^2+ 2g(0)z+g(0)^2 z^2/b_N^2$ and the bound of $b_N^2$ \cite[Equation 3]{Hall1982}
$$g(0)(2\log N-\log \log N-\log 4\pi)\le b_N^2\le 2g(0)\log N$$
we can now upper bound the exponential term by,
$$\exp\left(-\frac{u_N(z)^2}{2}  \mathbf{1}^t \Sigma_2^{-1}\mathbf 1\right)\le  N^{-\frac{2g(0)}{g(0)+g(\alpha-\beta)}}\e^{-\frac{2g(0)z}{g(0)+g(\alpha-\beta)}+\o{1}}.$$

%&= \frac1{2\pi} \frac{(g(0)+g(\alpha-\beta))^2}{(g(0)^2-g(\alpha-\beta)^2)^{1/2}u_N(z)^2}\exp\left(-\frac{u_N(z)^2}{2}  \frac{2(g(0)-g(\alpha-\beta))}{g(0)^2-g(\alpha-\beta)^2}\right)  \\
%&\le \frac1{4\pi \log N} \frac{\left(1+ \frac{g(\alpha-\beta)}{g(0)}\right)^{3/2}}{\left(1-\frac{g(\alpha-\beta)}{g(0)}\right)^{1/2}} N^{-\frac{2g(0)}{g(0)+g(\alpha-\beta)}}\left(4\pi \log N \right)^{\frac{g(0)}{g(0)+g(\alpha-\beta)}}\e^{-\frac{2g(0)z}{g(0)+g(\alpha-\beta)}+\o{1}}\\
%&\le \frac{\left(1+ \frac{g(\alpha-\beta)}{g(0)}\right)^{3/2}}{\left(1-\frac{g(\alpha-\beta)}{g(0)}\right)^{1/2}} N^{-\frac{2g(0)}{g(0)+g(\alpha-\beta)}}\e^{-\frac{2g(0)z}{g(0)+g(\alpha-\beta)}+\o{1}}
%\end{align*}

Also note that for $x\neq 0$, $g(\|x\|)/g(0)\le g(e_1)/g(0)=1-\kappa$ where $\kappa:= \mathbb P_0\left( \widetilde H_0=+\infty\right)\in (0,1)$ and $\widetilde H_0=\inf\left\{n\ge 1:\,S_n=0 \right\}$. Hence we have that
$$\frac{g(0)}{g(0)+g(\alpha-\beta)}\ge \frac1{2-\kappa} \qquad\text{ and } \qquad\frac{g(\alpha-\beta)}{g(0)+g(\alpha-\beta)}\le 1-\kappa.$$
We obtain thus
\eqa{*}&&\prob\left( \vr_\alpha> u_N(z), \vr_\beta>u_N(z)\right)\le \frac{(2-\kappa)^{3/2}}{\kappa^{1/2}} N^{-\frac{2}{(2-\kappa)}} \max\left( \e^{-2z}\one_{\left\{z\le 0\right\}}, \e^{-2z/(2-\kappa)}\one_{\left\{z>0\right\}}\right).
\eeqa{*}
We get finally for some constants $c,\,c'>0$ depending only on $d$ and $\kappa$
\begin{align}
b_2&\le c N (\log N)^{d(2+2\eps)} \frac{(2-\kappa)^{3/2}}{\kappa^{1/2}} N^{-\frac{2}{(2-\kappa)}}\max\left( \e^{-2z}\one_{\left\{z\le 0\right\}}, \e^{-2z/(2-\kappa)}\one_{\left\{z>0\right\}}\right)\nonumber\\
&\le c' N^{-\frac{\kappa}{(2-\kappa)}} ( \log N)^{d(2+2\eps)} \max\left( \e^{-2z}\one_{\left\{z\le 0\right\}}, \e^{-2z/(2-\kappa)}\one_{\left\{z>0\right\}}\right).\label{eq:b_2}
\end{align}
Since $\kappa/(2-\kappa)>0$ we have that $b_2=\o{1}$. \\

\indent (iii) Recall $b_3=\sum_{\alpha\in A}\Ex{\left|\Ex{X_\alpha-p_\alpha\left|\right.\mathcal H_\alpha}\right|}$.
It will be convenient to introduce also another $\sigma$-algebra which strictly contains $\mathcal H_\alpha=\sigma\left(X_\beta:\,\beta \in A\setminus B_\alpha\right)$, that is
$$
\mathcal H_\alpha'=\sigma\left(\vr_\beta:\,\beta\in A\setminus B_\alpha\right).
$$
Using the tower property of the conditional expectation and Jensen's inequality
\begin{align*}
\Ex{\left|\Ex{X_\alpha-p\left|\right.\mathcal H_\alpha}\right|}\le\Ex{\left|\Ex{X_\alpha-p\left|\right.\mathcal H_\alpha'}\right|}.
\end{align*}
At this point we recognize, thanks to Corollary~\ref{fact:MP}, that
$$
\Ex{X_\alpha\left|\right.\mathcal H_\alpha'}=\widetilde \prob_{\Z^d\setminus(A\setminus B_\alpha)}(\psi_\alpha+\mu_\alpha>u_N(z))\quad \prob-a.~s.
$$
where $(\psi_\alpha)_{\alpha\in \Z^d}$ is a Gaussian Free Field with zero boundary conditions outside $A\setminus B_\alpha$. In addition, observe that $g_{U_\alpha}(\alpha)\le g(0)$ \cite[Section 1.5]{Lawler}. We will write more compactly $U_\alpha:=\Z^d\setminus(A\setminus B_\alpha)$.

We will make use of the fact that $\mu_\alpha$ is a centered Gaussian, and apply the same estimates of \cite{BP}: first observe using strong Markov property we have $\beta\in A\setminus B_\alpha$,
\begin{equation}\label{eq:Markov}
g(\alpha,\beta)=\sum_{\gamma\in A\setminus B_\alpha}\mathbb P_\alpha\left(H_{A\setminus B_\alpha}<+\infty,\,S_{H_{A\setminus B_\alpha}}=\gamma\right)g(\gamma,\beta).
\end{equation}
%Denoting by $S_\cdot\circ\theta_m=S_{m+\cdot}$ the time shift by $m$ of the random walk, we observe that for $\beta\in A\setminus B_\alpha$
%\eqa{}
%g(\alpha,\beta)&=&\mathbb E_\alpha\left[\sum_{n\ge 0}\one_{\left\{S_n=\beta\right\}}\right]=\mathbb E_\alpha\left[\left(\sum_{n\ge 0}\one_{\left\{S_n=\beta\right\}}\right)\circ \theta_{H_{A\setminus B_\alpha}}\right]\nonumber\\
%&=&\mathbb E_\alpha\left[\mathbb E_{S_{H_{A\setminus B_\alpha}}}\left[\sum_{n\ge 0}\one_{\left\{S_n=\beta\right\}}\right] \right]=\mathbb E_\alpha\left[g\left(S_{H_{A\setminus B_\alpha}},\beta\right),\,H_{A\setminus B_\alpha}<+\infty\right]\nonumber\\
%&=&\sum_{\gamma\in A\setminus B_\alpha}\mathbb P_\alpha\left(H_{A\setminus B_\alpha}<+\infty,\,S_{H_{A\setminus B_\alpha}}=\gamma\right)g(\gamma,\beta)\label{eq:Markov}.
%\eeqa{}
We can plug this in to obtain
\begin{align}
&\var{\mu_\alpha}
%&=\sum_{\beta,\,\gamma\in A\setminus B_\alpha}\mathbb P_\alpha\left(H_{A\setminus B_\alpha}<+\infty,\,S_{H_{A\setminus B_\alpha}}=\beta\right)\mathbb P_\alpha\left(H_{A\setminus B_\alpha}<+\infty,\,S_{H_{A\setminus B_\alpha}}=\gamma\right) g(\beta,\,\gamma)\nonumber\\
\stackrel{\eqref{eq:Markov}}{=}\sum_{\beta\in A\setminus B_\alpha}\mathbb P_\alpha\left(H_{A\setminus B_\alpha}<+\infty,\,S_{H_{A\setminus B_\alpha}}=\beta\right)g(\alpha,\,\beta)\le \sup_{\beta\in A\setminus B_\alpha}g(\alpha,\,\beta)\nonumber\\
&\le \frac{c}{(\log N)^{2(1+\eps)(d-2)}}\label{eq:barza}
\end{align}
by the standard estimates for the Green's function
\eqa{}\label{eq:bound_g}c_d\|\alpha-\beta\|^{2-d}\le g(\alpha,\,\beta)\le C_d\|\alpha-\beta\|^{2-d}\eeqa{}
for some $0<c_d\le C_d<+\infty$ independent of $\alpha$ and $\beta$ \cite[Theorem 1.~5.~4]{Lawler}.
Using the estimate
\eq{}\label{eq:prova}
\prob\left(\left|\mathcal N(0,1)\right|>a\right)\le \e^{-{a^2/2}},\;a>0\eeq{}
\vspace{-0.2cm}
we get that there exists a constant $C>0$ such that
\eq{}\label{eq:rate_zero}
\prob\left(|\mu_\alpha|>\left(u_N(z)\right)^{-1-\eps} \right)\le C \exp\left(-(\log N)^{(2d-5)(1+\eps)} \right).\eeq{}
Note that this quantity goes to zero since $d\ge 3$. Hence
\eqa{*}
&&\Ex{\left|\widetilde \prob_{U_\alpha}(\psi_\alpha+\mu_\alpha>u_N(z))-p\right|}=\Ex{\left|\widetilde \prob_{U_\alpha}(\psi_\alpha+\mu_\alpha>u_N(z))-p\right|\one_{\left\{|\mu_\alpha|\le\left(u_N(z)\right)^{-1-\eps} \right\}}}\\
&&+\Ex{\left|\widetilde \prob_{U_\alpha}(\psi_\alpha+\mu_\alpha>u_N(z))-p\right|\one_{\left\{|\mu_\alpha|>\left(u_N(z)\right)^{-1-\eps} \right\}}}=:T_1+T_2.\eeqa{*}
By \eqref{eq:rate_zero} and the fact that $d\ge 3$, we notice that $N T_2=\o{1}$. Therefore it is sufficient to treat the term $T_1$. By conditioning on whether $p$ is larger or smaller than $\prob_{U_\alpha}(\psi_\alpha+\mu_\alpha> u_N(z))$ we can split the event in $T_1$ into the following two terms.
\eqa{}
%&&\Ex{\left|\widetilde \prob_{U_\alpha}(\psi_\alpha+\mu_\alpha>u_N(z))-p\right|\one_{\left\{|\mu_\alpha|\le\left(u_N(z)\right)^{-1-\eps} \right\}}}\nonumber\\
&&\Ex{\left(\widetilde \prob_{U_\alpha}(\psi_\alpha+\mu_\alpha>u_N(z))-p\right)\one_{\left\{|\mu_\alpha|\le\left(u_N(z)\right)^{-1-\eps} \right\}}\one_{\left\{p<\widetilde \prob_{U_\alpha}(\psi_\alpha+\mu_\alpha>u_N(z))\right\}}}\nonumber\\
&&+\Ex{\left(p-\widetilde \prob_{U_\alpha}(\psi_\alpha+\mu_\alpha>u_N(z))\right)\one_{\left\{|\mu_\alpha|\le\left(u_N(z)\right)^{-1-\eps} \right\}}\one_{\left\{p\ge \widetilde \prob_{U_\alpha}(\psi_\alpha+\mu_\alpha> u_N(z))\right\}}}\nonumber\\
&&=:T_{1,1}+T_{1,2}.\label{eq:treat}
\eeqa{}
We will now deal with $T_{1,2}$. The first one can be treated with a similar calculation.  Using Mill's ratio and fact that $\psi_\alpha$ has variance $g_{U_\alpha}$ we get that,
{\small\eqa{}
&&p-\widetilde \prob_{U_\alpha}(\psi_\alpha+\mu_\alpha>u_N(z))\nonumber\\
&&\le \frac{\sqrt{g(0)}\e^{-\frac{u_N(z)^2}{2g(0)}}}{\sqrt{2\pi}u_N(z)}-\left(1-\left(\frac{\sqrt{g_{U_\alpha}(\alpha)}}{u_N(z)-\mu_\alpha}\right)^{2}\right)\frac{\sqrt{g_{U_\alpha}(\alpha)}\e^{-\frac{(u_N(z)-\mu_\alpha)^2}{2 g_{U_\alpha}(\alpha)}}}{\sqrt{2\pi }(u_N(z)-\mu_\alpha)}\nonumber\\
\eeqa{}}
We have on the event  $\left\{|\mu_\alpha|\le\left(u_N(z)\right)^{-1-\eps} \right\}$ that the above is bounded by
\begin{equation}\label{eq:here2}
\frac{\sqrt{g(0)}\e^{-\frac{u_N(z)^2}{2g(0)}}}{\sqrt{2\pi }u_N(z)}\left(1-\left(1+\o{1}\right)\frac{\sqrt{g_{U_\alpha}(\alpha)}u_N(z)\e^{\left(1-\frac{g(0)}{g_{U_\alpha}(\alpha)}\right)\frac{u_N(z)^2}{2g(0)}+\frac{u_N(z)^{-\eps}}{g_{U_\alpha}(\alpha)}-\frac{u_N(z)^{-2-2\eps}}{2g_{U_\alpha}(\alpha)}}}{\sqrt{ g(0)}u_N(z)(1-u_N(z)^{-2-\eps})} \right).
\end{equation}
%&&\le  \frac{\sqrt{g(0)}\e^{-\frac{u_N(z)^2}{2g(0)}}}{\sqrt{2\pi }u_N(z)}\left(1-\left(1+\o{1}\right)\frac{\sqrt{g_{U_\alpha}(\alpha)}u_N(z)\e^{\left(1-\frac{g(0)}{g_{U_\alpha}(\alpha)}\right)\frac{u_N(z)^2}{2g(0)}+\frac{\mu_\alpha u_N(z)}{g_{U_\alpha}(\alpha)}-\frac{\mu_\alpha^2}{2g_{U_\alpha}(\alpha)}}}{\sqrt{ g(0)}(u_N(z)-\mu_\alpha)} \right)\nonumber\\
%&&=\frac{\sqrt{g(0)}\e^{-\frac{u_N(z)^2}{2g(0)}}}{\sqrt{2\pi }u_N(z)}\left(1-\left(1+\o{1}\right)\frac{\sqrt{g_{U_\alpha}(\alpha)}u_N(z)\e^{\left(1-\frac{g(0)}{g_{U_\alpha}(\alpha)}\right)\frac{u_N(z)^2}{2g(0)}+\frac{u_N(z)^{-\eps}}{g_{U_\alpha}(\alpha)}-\frac{u_N(z)^{-2-2\eps}}{2g_{U_\alpha}(\alpha)}}}{\sqrt{ g(0)}u_N(z)(1-u_N(z)^{-2-\eps})} \right).\label{eq:here2}
%\eeqa{}}
Since the bound is non random, by bounding the indicator functions by $1$,
$$
\Ex{\left(p-\widetilde \prob_{U_\alpha}(\psi_\alpha+\mu_\alpha>u_N(z))\right)\one_{\left\{|\mu_\alpha|\le\left(u_N(z)\right)^{-1-\eps} \right\}}\one_{\left\{p\ge\widetilde \prob_{U_\alpha}(\psi_\alpha+\mu_\alpha\le u_N(z))\right\}}}\le\eqref{eq:here2}.
$$
Now
\eq{}\label{eq:tap}
b_3\le \sum_{\alpha\in A}(T_1+T_2)\stackrel{\eqref{eq:rate_zero}}\le \sum_{\alpha\in A}T_1 +\o{1}=\sum_{\alpha\in A}T_{1,1}+\sum_{\alpha\in A}T_{1,2}+\o{1}.
\eeq{}
Then
\eqa{*}
T_{1,2}=\frac{\sqrt{g(0)}\e^{-\frac{u_N(z)^2}{2g(0)}}}{\sqrt{2\pi }u_N(z)} \left(1-(1+\o{1})\left(\frac{\sqrt{g_{U_\alpha}(\alpha)}u_N(z)\e^{\left(1-\frac{g(0)}{g_{U_\alpha}(\alpha)}\right)\frac{u_N(z)^2}{2g(0)}+\o{1}}}{\sqrt{g(0) }u_N(z)(1+\o{1})}\right)\right).
\eeqa{*}
Observe that $1-\frac{g(0)}{g_{U_\alpha}(\alpha)}<0$ since $g(0)>g_{U_\alpha}(\alpha).$ We observe further that (and we will prove it in a moment)
\begin{claim}\label{claim}
$\sup_{\alpha\in A} \left(1-\frac{g(0)}{g_{U_\alpha}(\alpha)}\right)u_N(z)^2=\o{1}.$
\end{claim}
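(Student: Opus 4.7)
The plan is to exploit the Markov decomposition already in play to equate $g(0)-g_{U_\alpha}(\alpha)$ with $\var{\mu_\alpha}$, and then recycle the bound \eqref{eq:barza} that was just established. The point is that since $\vr_\alpha = \widetilde\vr_\alpha + \mu_\alpha$ under $\prob$, with $\widetilde\vr_\alpha$ independent of $\sigma(\vr_\beta : \beta \in A\setminus B_\alpha)$ and hence of $\mu_\alpha$ (by Lemma~\ref{fact:MP}), and with $\widetilde\vr_\alpha$ distributed as a DGFF on $U_\alpha$ evaluated at $\alpha$, we have
\begin{equation*}
g(0) = \var{\vr_\alpha} = \var{\widetilde\vr_\alpha} + \var{\mu_\alpha} = g_{U_\alpha}(\alpha) + \var{\mu_\alpha}.
\end{equation*}
Therefore $g(0) - g_{U_\alpha}(\alpha) = \var{\mu_\alpha}$, and \eqref{eq:barza} already gives
\begin{equation*}
0 \le g(0) - g_{U_\alpha}(\alpha) \le \frac{c}{(\log N)^{2(1+\eps)(d-2)}}
\end{equation*}
uniformly in $\alpha \in A$.

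Next I would lower bound the denominator: since the right-hand side above tends to $0$, for $N$ large enough we have $g_{U_\alpha}(\alpha) \ge g(0)/2$ uniformly in $\alpha$. Combining these two estimates,
\begin{equation*}
\left| 1 - \frac{g(0)}{g_{U_\alpha}(\alpha)} \right| = \frac{g(0) - g_{U_\alpha}(\alpha)}{g_{U_\alpha}(\alpha)} \le \frac{2c}{g(0)\, (\log N)^{2(1+\eps)(d-2)}}.
\end{equation*}
Finally, using the well-known asymptotics $u_N(z)^2 \sim 2g(0)\log N$ (which follows from \eqref{eq:cs}), I multiply through to obtain
\begin{equation*}
\sup_{\alpha \in A} \left| 1 - \frac{g(0)}{g_{U_\alpha}(\alpha)} \right| u_N(z)^2 \le C \,(\log N)^{\,1 - 2(1+\eps)(d-2)}.
\end{equation*}
Since $d \ge 3$ and $\eps > 0$, the exponent $1 - 2(1+\eps)(d-2) \le -1 - 2\eps < 0$, so the right-hand side is $\o{1}$, proving the claim.

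The argument is essentially immediate once one notices the variance decomposition; the only point that requires a little care is the uniformity in $\alpha$, but every bound used ($\var{\mu_\alpha}$ via Green's function decay, the asymptotics of $u_N(z)$, and the lower bound on $g_{U_\alpha}(\alpha)$) is uniform in $\alpha \in A$, so no additional work is needed there.
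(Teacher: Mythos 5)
Your proof is correct and follows essentially the same route as the paper. The paper derives the identity $g_{U_\alpha}(\alpha)=g(0)-\sum_{\gamma\in A\setminus B_\alpha}\mathbb P_\alpha(H_{A\setminus B_\alpha}<+\infty,\,S_{H_{A\setminus B_\alpha}}=\gamma)\,g(\gamma,\alpha)$ from the Markov property (the subtracted sum is precisely $\var{\mu_\alpha}$, so your variance decomposition $g(0)=g_{U_\alpha}(\alpha)+\var{\mu_\alpha}$ is the same statement and neatly lets you recycle \eqref{eq:barza} instead of re-deriving the bound), and it then lower-bounds the denominator by $g_{U_\alpha}(\alpha)\ge 1$ via the random-walk representation, whereas you use the equally valid $g_{U_\alpha}(\alpha)\ge g(0)/2$ for large $N$; both variants combine with $u_N(z)^2\sim 2g(0)\log N$ to give the same $\o{1}$ conclusion with the same exponent $1-2(1+\eps)(d-2)<0$.
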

Therefore $T_{1,2}=\o{1}$ uniformly in $\alpha$. This yields that
\eqa{}\label{eq:tapp}\sum_{\alpha\in\mathcal A}T_{1,2}\le N\frac{\sqrt{g(0)}\e^{-\frac{u_N(z)^2}{2g(0)}}}{\sqrt{2\pi }u_N(z)}\o{1}=\e^{-z+\o{1}}\o{1}.\label{eq:T_12} \eeqa{}
Analogously, $ \sum_{\alpha\in\mathcal A}T_{1,\,1}=\o{1}$.
Plugging \eqref{eq:tapp} in \eqref{eq:tap}, one obtains $b_3=\o{1}$.
%In particular, a standard computation yields $b_3\le c (\log N)^{-1+2(d-2)(1+\eps)}$ for some $c>0$.\\
We now only need to show Claim~\ref{claim}. By the Markov property we know
$$g_{U_\alpha}(\alpha)= g(0)-\sum_{\gamma\in A\setminus B_\alpha} \mathbb P_\alpha \left( H_{A\setminus B_\alpha}<+\infty, \,S_{H_{A\setminus B_\alpha}}=\gamma\right)g(\gamma, \alpha).$$
This shows that
$$0\le \frac{g(0)}{g_{U_\alpha}(\alpha)}-1\le \frac{\sup_{\gamma\in A\setminus B_\alpha}g(\gamma,\alpha)}{g_{U_\alpha}(\alpha)}.$$
Note that $g(\gamma, \alpha)\stackrel{\eqref{eq:bound_g}}{\le} C_d(\log N)^{-2(d-2)(1+\epsilon)}$. Also,
$g_{U_\alpha}(\alpha)= \mathbb E_\alpha\left[ \sum_{n=0}^{H_{A\setminus B_\alpha}} \one_{\left\{S_n=\alpha\right\}}\right]\ge 1$ and hence we have
\eq{}\label{eq:bizzo}0\leq\frac{g(0)}{g_{U_\alpha}(\alpha)}-1\le c(\log N)^{-2(d-2)(1+\epsilon)}\eeq{}
from which it follows that
\eq{}\label{eq:abba}\left(1-\frac{g(0)}{g_{U_\alpha}(\alpha)}\right)u_N(z)^2 \le c (\log N)^{-2(d-2)(1+\epsilon)}(\log N+z+\o{1})=\o{1}.\eeq{}
Therefore the claim follows and we have shown pointwise convergence.

\end{proof}
\subsection{DGFF with boundary conditions: proof of Theorem~\ref{thm:main2}}
The idea of the proof is to exploit the convergence we have obtained in the previous section. We will show a lower bound through a comparison with i.~i.~d. variables, and an upper bound by considering the maximum restricted to the bulk of $V_N$, concluding by means of a convergence-of-types result. We abbreviate $g_N(\cdot,\,\cdot):=g_{V_N}(\cdot,\,\cdot)$.
For $\delta>0$ define (recall that $V_N=[0,\,n-1]^d\cap\Z^d$, with $N=n^d$)
$$V_N^\delta:=\left\{\alpha\in V_N:\,\|\alpha-\gamma\|>\delta N^{1/d},\,\gamma\in \Z^d\setminus V_N\, \right\}.$$
%We remark that it is sufficient to show pointwise convergence,
%as uniform convergence would follow again from \citet[Satz I]{Polya}.

We begin with the easier lower bound.
\begin{proof}[Proof of Theorem~\ref{thm:main2}: lower bound]
We will need a lower and an upper bound on the limiting distribution of the maximum. Let us start with the former. We use the shortcut $\widetilde\prob_N:=\widetilde\prob_{V_N}$. First we note that since the covariance of $(\psi_\alpha)$ is non-negative, we can apply Slepian's lemma for the lower bound. Let $(Z_\alpha)_{\alpha\in V_N}$ be independent mean zero Gaussian variables with variance $g_N(\alpha)$; then by Slepian's lemma it follows that
$$\widetilde\prob_N\left(\max_{\alpha\in V_N} Z_\alpha\le u_N(z)\right)\le \widetilde \prob_N\left(\max_{\alpha\in V_N} \psi_\alpha\le u_N(z)\right),$$
where $u_N(z)=a_Nz+b_N$ as before. Then we want to analyze $\prob (\max_{\alpha\in A} Z_\alpha\le u_N(z))$. First fix $z\in\R$. Take $N$ large enough such that
$-g(0)b_N^2\le z$ (this is possible as $b_N^2\to+ \infty$).
Now note that
\begin{align*}
 \widetilde\prob_N\left(\max_{\alpha\in V_N} Z_\alpha\le u_N(z)\right)& =\prod_{\alpha\in V_N} (1- \widetilde\prob_N( Z_\alpha>u_N(z)))\\
&\stackrel{\eqref{eq:Mills}}{\ge} \prod_{\alpha\in V_N} \left(1- \frac{\e^{- \frac{u_N(z)^2}{ 2g_N(\alpha)}}}{\sqrt{2\pi} u_N(z)} \sqrt{g_N(\alpha)}\right)\ge \left(1- \frac{\e^{- \frac{u_N(z)^2}{ 2g(0)}}}{\sqrt{2\pi} u_N(z)} \sqrt{g(0)}\right)^{N}.
\end{align*}
The last term converges to $\exp(-\e^{-z})$ as $N\to+ \infty$. This shows that for any fixed $z\in \R$,
$$\liminf_{N\to+ \infty}\widetilde\prob_N\left(\max_{\alpha\in V_N} \psi_\alpha\le u_N(z)\right)\ge \exp(-\e^{-z}).$$
\end{proof}
%We need some preliminary Lemmas for the upper bound. We begin with
%\begin{lemma}\label{lemma:almost_g}
%For any $\delta>0$ and $\alpha,\beta\in V_N^\delta$ one has
%\begin{equation}
%g(\alpha,\beta)-C_d\left(\delta N^{1/d}\right)^{2-d}\le g_N(\alpha,\beta)\le g(\alpha,\beta).
%\end{equation}
%In particular we have, $g_N(\alpha)=g(0)\left(1+\O{N^{(2-d)/d}}\right)$ uniformly in $\alpha\in V_N^\delta$.
%\end{lemma}
%\begin{proof}
%It follows from \citet[Proposition 1.6]{ASS} that
%$$g_N(\alpha,\beta)= g(\alpha,\beta)- \sum_{\gamma\in \partial V_N}\mathbb P_\alpha\left(H_{\Z^d\setminus V_N}<\infty, S_{H_{\Z^d\setminus V_N}}=\gamma\right)g(\gamma,\beta).$$
%Note that $g_N(\alpha, \beta)\le g(\alpha,\beta)$. Take any $\alpha,\beta\in V_N^\delta$: using the bounds~\eqref{eq:bound_g},
%$$\sum_{\gamma\in \partial V_N}\mathbb P_\alpha\left(H_{\Z^d\setminus V_N}<\infty, S_{H_{\Z^d\setminus V_N}}=\gamma\right)g(\gamma,\beta)\le \sup_{\gamma\in \partial V_N} g(\gamma, \beta)\le C_d\sup_{\gamma\in \partial V_N}\|\gamma-\beta\|^{2-d}$$
%which gives that
%\begin{equation}
%g_N(\alpha, \beta)\ge g(\alpha,\beta) - C_d \left(\delta N^{1/d}\right)^{2-d}.
%\end{equation}
%Hence the proof follows.
%\end{proof}
In order to prove the upper bound of Theorem~\ref{thm:main2}, we shall need a Lemma which will allow us to derive the convergence of the maximum in $V_N$ from that of the maximum in $V_N^\delta$.
\begin{lemma}\label{lemma:cot}
Let $N\ge 1$, $F_N$ be a distribution function, and $m_N=(1-2\delta)^d N$. Let $a_N$ and $b_N$ be as in~\eqref{eq:cs}. If $\lim_{N\to+\infty}F_N(a_{m_N}z+ b_{m_N})=\exp(-\e^{-z})$, then $$\lim_{N\to+\infty}F_N(a_Nz+b_N)=\exp\left(-\e^{-z+ d\log(1-2\delta)}\right).$$
\end{lemma}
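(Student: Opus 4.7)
The proof is a standard convergence-of-types argument. The plan is to write the argument of $F_N$ in the conclusion in terms of the argument appearing in the hypothesis, then use monotonicity of $F_N$ together with continuity of the Gumbel limit.

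\textbf{Step 1 (algebraic reduction).} Define
\[
\tilde z_N := \frac{a_N z + b_N - b_{m_N}}{a_{m_N}},
\]
so that $a_N z + b_N = a_{m_N}\tilde z_N + b_{m_N}$. Hence $F_N(a_N z + b_N) = F_N(a_{m_N}\tilde z_N + b_{m_N})$, and the only task is to identify $\lim_N \tilde z_N$.

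\textbf{Step 2 (asymptotics of $a_N,b_N$ at scales $N$ and $m_N$).} Set $c := (1-2\delta)^d$, so $\log m_N = \log N + \log c$. Taylor expanding $\sqrt{2\log m_N} = \sqrt{2\log N}\sqrt{1 + \log c/\log N}$ gives
\[
\sqrt{2\log m_N} = \sqrt{2\log N} + \frac{\log c}{\sqrt{2\log N}} + \O{(\log N)^{-3/2}},
\]
and $\log\log m_N = \log\log N + \o{1}$. Plugging these into the definition \eqref{eq:cs} yields
\[
b_N - b_{m_N} = -\sqrt{g(0)}\,\frac{\log c}{\sqrt{2\log N}} + \o{(\log N)^{-1/2}},
\]
while $a_N \sim a_{m_N} \sim \sqrt{g(0)}/\sqrt{2\log N}$; in particular $a_N/a_{m_N}\to 1$. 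Combining these,
\[
\tilde z_N = \frac{a_N}{a_{m_N}}\, z + \frac{b_N - b_{m_N}}{a_{m_N}} \;\longrightarrow\; z - \log c = z - d\log(1-2\delta).
\]

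\textbf{Step 3 (convergence-of-types).} Write $\tilde z := z - d\log(1-2\delta)$. For any $\eta>0$, by Step 2, $\tilde z - \eta \le \tilde z_N \le \tilde z + \eta$ for all sufficiently large $N$. Since each $F_N$ is nondecreasing,
\[
F_N\bigl(a_{m_N}(\tilde z - \eta) + b_{m_N}\bigr) \;\le\; F_N(a_N z + b_N) \;\le\; F_N\bigl(a_{m_N}(\tilde z + \eta) + b_{m_N}\bigr).
\]
By the hypothesis $F_N(a_{m_N} x + b_{m_N})\to \exp(-\e^{-x})$ pointwise, so letting $N\to +\infty$ on both sides and then $\eta\to 0$ (using continuity of the Gumbel distribution) gives
\[
\lim_{N\to+\infty} F_N(a_N z + b_N) = \exp(-\e^{-\tilde z}) = \exp\bigl(-\e^{-z + d\log(1-2\delta)}\bigr),
\]
as claimed.

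\textbf{Main obstacle.} Nothing is genuinely deep here; the only place one must be careful is the expansion in Step 2, where the leading-order term $\sqrt{2\log N}$ in $b_N$ cancels between $b_N$ and $b_{m_N}$, so the relevant contribution comes from the next-order correction $\log c/\sqrt{2\log N}$. This correction has the same order as $a_{m_N}$, which is precisely what produces the shift $-d\log(1-2\delta)$ in the limit. The rest is bookkeeping plus the standard monotone sandwich argument.
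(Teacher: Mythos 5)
Your proof is correct and follows essentially the same route as the paper: both reduce the claim to verifying $a_{m_N}/a_N\to 1$ and $(b_{m_N}-b_N)/a_N\to d\log(1-2\delta)$ via the Taylor expansion $\sqrt{2\log m_N}=\sqrt{2\log N}+\log c/\sqrt{2\log N}+\O{(\log N)^{-3/2}}$, and then invoke convergence-of-types. The only cosmetic difference is that you spell out the monotone-sandwich step directly, whereas the paper cites \citet[Proposition 0.2]{Resnick} for the same deduction.
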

\begin{proof}
The proof follows from a convergence-of-types theorem (see \citet[Proposition 0.2]{Resnick}) if we can show that
\begin{equation}\label{eq: cot1}
\frac{a_{m_N}}{a_N}\to 1 \quad\text{ and }\quad \frac{b_{m_N}-b_N}{a_N}\to d\log(1-2\delta).
\end{equation}
It is easy to see that
$$\frac{a_{m_N}}{a_N}\sim \left(1+\frac{d\log(1-2\delta)}{\log N}\right)^{1/2}\to 1.$$

To show the second asymptotics note that
\begin{equation}\label{eq:cot2}
\sqrt{2g(0)\log m_N}-\sqrt{2g(0)\log N}=\left[\frac{d\log (1-2\delta)}{2\log N}+ \O{\frac{1}{(\log N)^2}}\right]\sqrt{2g(0)\log N}.
\end{equation}
Also observe that as $N\to+ \infty$ one gets
\begin{align*}
&\sqrt{g(0)}\left[ \frac{\log \log  (4\pi N)}{2\sqrt{2\log N}}-\frac{\log \log (4\pi m_N )}{2\sqrt{2\log m_N}}\right]\\
%&=\frac{\sqrt{g(0)}}{2\sqrt{2\log N}}\left[\log \log ( 4\pi N)-\log[d\log(1-2\delta)+\log (4\pi N)]\left(1+\frac{d\log(1-2\delta)}{\log N}\right)^{-1/2}\right]\\
&=\frac{\sqrt{g(0)}}{2\sqrt{2\log N}}\left[-\log\left(1+\frac{d\log(1-2\delta)}{\log N}\right)+\o{1}\right].
\end{align*}
So using the above equation and \eqref{eq:cot2} we get that
\begin{align*}
\frac{b_{m_N}-b_N}{a_N}&=\frac{b_{m_N}-b_N}{g(0)}\sqrt{2g(0)\log N}(1+\o{1})\to d\log(1-2\delta).
%&=\left[\frac{d\log (1-2\delta)}{2\log N}+ \O{\frac{1}{(\log N)^2}}\right]2\log N(1+\o{1})+\\
%&+\left[-\log\left(1+\frac{d\log(1-2\delta)}{\log N}\right)+\o{1}\right]
\end{align*}
\end{proof}
We have now the tools to finish with the upper bound.
\begin{proof}[Proof of Theorem~\ref{thm:main2}: upper bound]
First fix $z\in \R$ and $\delta>0$, set $m_N:=\left|V_N^{\delta}\right|=(1-2\delta)^d N$. For the upper bound, we again use
%Theorem~\ref{thm:AGG}, but this time on $V_N^{\delta}$. We first observe that for any $\delta>0$
%$$\widetilde\prob_N\left( \max_{\alpha\in V_N}\psi_\alpha\le u_N(z)\right)\le\widetilde\prob_N\left(\max_{\alpha\in V_N^\delta} \psi_\alpha\le u_N(z)\right).$$
Lemma~\ref{fact:MP} and the fact that, for $\alpha\in V_N^\delta$, one has the equality $\vr_\alpha= \psi_\alpha+ \mu_\alpha^{(N)}$ under the infinite volume measure $\prob$,  where $\mu_\alpha^{(N)}= \E\left[ \vr_\alpha| \mathcal F_{\partial V_N}\right]
$.  Hence if we fix $\varepsilon>0$, and condition on the event that
$$\left\{\max_{\alpha\in V_N^\delta} |\mu_\alpha^{(N)}|\le \varepsilon a_{m_N}\right\}$$ (where $a_{m_N}$ is defined according to \eqref{eq:cs}) we have
\begin{align}
\widetilde \prob_N\left(\max_{\alpha\in V_N} \psi_\alpha\le u_{m_N}(z)\right)&\le \prob\left( \max_{\alpha\in V_N^\delta} \vr_\alpha\le u_{m_N}(z+\eps)\right)+ \prob\left( \max_{\alpha\in V_N^\delta} |\mu_\alpha|>\varepsilon a_{m_N}\right).\label{eq:upperboundfinite}
\end{align}
First we show that the second term goes to zero. Observe that $\mu_\alpha$ is a centered Gaussian with variance 
\[
\max_{\beta\in V_N^\delta}\var{\mu_\beta}\le \sup_{\beta\in V_N^\delta,\,\gamma\in \partial V_N}g(\beta,\,\gamma)=\O{N^{(2-d)/d}}.
\]
Let $(\Phi_\alpha)_{\alpha\in V_N^\delta}$ be a collection of i.i.d. Gaussians with mean zero and $\E\left[\Phi_\alpha^2\right]=\E\left[\mu_\alpha^2\right]$ for all $\alpha$. By Slepian and \citet[Prop. 1.~1.~3]{Tal03} we have
\begin{align*}
\prob\left( \max_{\alpha\in V_N^\delta} |\mu_\alpha|>\varepsilon a_{m_N}\right)&\le \frac{2\,\E\left[ \max_{\alpha\in V_N^\delta} \Phi_\alpha\right]}{a_{m_N}\varepsilon}+2\,\prob\left(\max_{\alpha\in V_N^\delta} \Phi_\alpha\le 0\right)\\
&\le\frac{2\sqrt{\max_{\beta\in V_N^\delta}\var{\mu_\beta} \log\left|V_N^\delta\right|}}{a_{m_N}\varepsilon}+\o{1}.
\end{align*}
Since $a_N$ grows like $\left(\sqrt{2\log N}\right)^{-1}$ as $N\to + \infty$,  we can conclude that, for every $\varepsilon>0$, \[\lim_{N\to+\infty}\prob\left( \max_{\alpha\in V_N^\delta} |\mu_\alpha|>\varepsilon a_{m_N}\right)= 0.\]
Using Theorem~\ref{thm:main} we have that
$$\lim_{N\to+\infty}\prob\left( \max_{\alpha\in V_N^\delta} \vr_\alpha\le u_{m_N}(z+\eps)\right)=\exp\left(-\e^{-(z+\eps)}\right),$$
and, being the limit continuous, we let $\varepsilon \to 0$ obtaining from~\eqref{eq:upperboundfinite}
\begin{equation}\label{eq:ineq}
\limsup_{N\to+ \infty}\widetilde \prob_N\left(\max_{\alpha\in V_N^\delta}\psi_\alpha\le u_{m_N}(z)\right)\le \exp(-\e^{-z}).
\end{equation}

Now using an easy comparison with independent random variables just as in the proof of the lower bound of Theorem~\ref{thm:main2} above it follows that~\eqref{eq:ineq} is in fact an equality.
%We claim that
%\begin{claim}\label{claim:conv:upper}
%For any fixed $z\in \R$ and $\delta>0$, set $m_N:=\left|V_N^{\delta}\right|=(1-2\delta)^d N$. Then
%$$\lim_{N\to+ \infty}\widetilde \prob_N\left(\max_{\alpha\in V_N^\delta}\psi_\alpha\le u_{m_N}(z)\right)=\exp(-\e^{-z}).$$
%\end{claim}
By Lemma~\ref{lemma:cot} one can conclude that $$\widetilde \prob_N\left(\max_{\alpha\in V_N^\delta} \psi_\alpha\le u_N(z)\right)=\exp\left(-\e^{-z+d\log(1-2\delta)}\right)$$
and thus letting $\delta\to 0$, the upper bound  follows.
%$$\limsup_{N\to+\infty}\widetilde \prob_N\left(\max_{\alpha\in V_N} \psi_\alpha\le u_N(z)\right)\le \exp\left(-\e^{-z+d\log(1-2\delta)}\right).$$

\end{proof}
\bibliographystyle{abbrvnat}

\bibliography{literatur}

\begin{thebibliography}{18}
\providecommand{\natexlab}[1]{#1}
\providecommand{\url}[1]{\texttt{#1}}
\expandafter\ifx\csname urlstyle\endcsname\relax
  \providecommand{\doi}[1]{doi: #1}\else
  \providecommand{\doi}{doi: \begingroup \urlstyle{rm}\Url}\fi

\bibitem[Adler and Taylor(2007)]{AdlerTaylor}
R.~Adler and J.~E. Taylor.
\newblock \emph{Random Fields and Geometry}.
\newblock Springer, 2007.

\bibitem[Arratia et~al.(1989)Arratia, Goldstein, and Gordon]{AGG}
R.~Arratia, L.~Goldstein, and L.~Gordon.
\newblock {Two Moments Suffice for Poisson Approximations: The Chen-Stein
  Method}.
\newblock \emph{Ann. Probab.}, 17\penalty0 (1):\penalty0 9--25, 01 1989.
\newblock \doi{10.1214/aop/1176991491}.
\newblock URL \url{http://dx.doi.org/10.1214/aop/1176991491}.

\bibitem[{Biskup} and {Louidor}(2013)]{BisLou}
M.~{Biskup} and O.~{Louidor}.
\newblock {Extreme local extrema of two-dimensional discrete Gaussian free
  field}.
\newblock \emph{ArXiv e-prints}, June 2013.
\newblock \url{http://adsabs.harvard.edu/abs/2013arXiv1306.2602B}.

\bibitem[{Biskup} and {Louidor}(2014)]{BisLou2}
M.~{Biskup} and O.~{Louidor}.
\newblock {Conformal symmetries in the extremal process of two-dimensional
  discrete Gaussian Free Field}.
\newblock \emph{ArXiv e-prints}, Oct. 2014.
\newblock \url{http://adsabs.harvard.edu/abs/2014arXiv1410.4676B}.

\bibitem[{Bramson} et~al.(2013){Bramson}, {Ding}, and {Zeitouni}]{BrDiZe}
M.~{Bramson}, J.~{Ding}, and O.~{Zeitouni}.
\newblock {Convergence in law of the maximum of the two-dimensional discrete
  Gaussian free field}.
\newblock \emph{ArXiv e-prints}, Jan. 2013.
\newblock \url{http://arxiv.org/abs/1301.6669v4}.

\bibitem[Chatterjee(2014)]{Chatterjee}
S.~Chatterjee.
\newblock \emph{Superconcentration and Related Topics}.
\newblock Springer Monographs in Mathematics. Springer, 2014.
\newblock ISBN 9783319038865.
\newblock URL \url{http://books.google.de/books?id=s-m5BAAAQBAJ}.

\bibitem[Drewitz and Rodriguez(2015)]{DrePF}
A.~Drewitz and P.-F. Rodriguez.
\newblock {High-dimensional asymptotics for percolation of Gaussian free field
  level sets}.
\newblock \emph{Electron. J. Probab.}, 20:\penalty0 no. 47, 1--39, 2015.
\newblock ISSN 1083-6489.
\newblock \doi{10.1214/EJP.v20-3416}.
\newblock URL \url{http://ejp.ejpecp.org/article/view/3416}.

\bibitem[Funaki(2005)]{Funaki}
T.~Funaki.
\newblock Stochastic interface models. {L}ectures on probability theory and
  statistics.
\newblock \emph{Lect. Notes in Math.}, 1869:\penalty0 103--274, 2005.

\bibitem[Georgii(1988)]{Georgii}
H.~O. Georgii.
\newblock \emph{Gibbs measures and Phase transitions}.
\newblock de Gruyter, Berlin, 1988.

\bibitem[Hall(1982)]{Hall1982}
P.~Hall.
\newblock On the rate of convergence in the weak law of large numbers.
\newblock \emph{Ann. Probab.}, 10\penalty0 (2):\penalty0 374--381, 05 1982.
\newblock \doi{10.1214/aop/1176993863}.
\newblock URL \url{http://dx.doi.org/10.1214/aop/1176993863}.

\bibitem[Lawler(1991)]{Lawler}
G.~Lawler.
\newblock \emph{Intersections of random walks}.
\newblock Birkh{\"a}user, Boston, 1991.

\bibitem[Popov and R{\'a}th(2015)]{BP}
S.~Popov and B.~R{\'a}th.
\newblock {On Decoupling Inequalities and Percolation of Excursion Sets of the
  Gaussian Free Field}.
\newblock \emph{Journal of Statistical Physics}, 159\penalty0 (2):\penalty0
  312--320, 2015.
\newblock ISSN 1572-9613.
\newblock \doi{10.1007/s10955-015-1187-z}.
\newblock URL \url{http://dx.doi.org/10.1007/s10955-015-1187-z}.

\bibitem[Resnick(1987)]{Resnick}
S.~Resnick.
\newblock \emph{Extreme Values, Regular Variation, and Point Processes}.
\newblock Applied probability : a series of the applied probability trust.
  Springer, 1987.
\newblock ISBN 9783540964810.
\newblock URL \url{http://books.google.it/books?id=DXi1QgAACAAJ}.

\bibitem[Rodriguez and Sznitman(2013)]{PFASS}
P.-F. Rodriguez and A.-S. Sznitman.
\newblock {Phase Transition and Level-Set Percolation for the Gaussian Free
  Field}.
\newblock \emph{Communications in Mathematical Physics}, 320\penalty0
  (2):\penalty0 571--601, 2013.
\newblock ISSN 0010-3616.
\newblock \doi{10.1007/s00220-012-1649-y}.
\newblock URL \url{http://dx.doi.org/10.1007/s00220-012-1649-y}.

\bibitem[Savage(1962)]{Savage}
I.~Savage.
\newblock { Mill's ratio for multivariate normal distributions}.
\newblock \emph{J. Res. Natl. Bur. Stand. Sec. B: Math.\& Math. Phys.},
  66B:\penalty0 93, 1962.

\bibitem[Sznitman(2012)]{ASS}
A.~Sznitman.
\newblock \emph{{Topics in Occupation Times and Gaussian Free Fields}}.
\newblock Zurich Lectures in Advanced Mathematics. American Mathematical
  Society, 2012.
\newblock ISBN 9783037191095.
\newblock URL \url{http://books.google.ch/books?id=RnENO-nQ7TIC}.

\bibitem[Sznitman(2011)]{ASSLow}
A.-S. Sznitman.
\newblock A lower bound on the critical parameter of interlacement percolation
  in high dimension.
\newblock \emph{Probability Theory and Related Fields}, 150\penalty0
  (3-4):\penalty0 575--611, 2011.
\newblock ISSN 0178-8051.
\newblock \doi{10.1007/s00440-010-0284-9}.
\newblock URL \url{http://dx.doi.org/10.1007/s00440-010-0284-9}.

\bibitem[Talagrand(2003)]{Tal03}
M.~Talagrand.
\newblock \emph{Spin Glasses: A Challenge for Mathematicians: Cavity and Mean
  Field Models}.
\newblock Ergebnisse der Mathematik und ihrer Grenzgebiete. 3. Folge A Series
  of Modern Surveys in Mathematics. Springer, 2003.
\newblock ISBN 9783540003564.
\newblock URL \url{http://books.google.de/books?id=Ej0VrZLsgMsC}.

\end{thebibliography}

\end{document}